\definecolor{refkey}{gray}{.75}
\definecolor{rev}{RGB}{0,0,0}
\newtheorem{theorem}{Theorem}[section]
\newtheorem{lemma}[theorem]{Lemma}
\theoremstyle{definition}
\theoremstyle{remark}
\newtheorem{remark}[theorem]{Remark}
\numberwithin{equation}{section}
\def\p{\partial}
\def\tilde{\widetilde}
\def\bs{\boldsymbol}
\def\ub{\boldsymbol{u}}
\def\vb{\bs{v}}
\def\fb{\bs{f}}
\def\kb{\mathbf{k}}
\def\nb{\mathbf{n}}
\def\tb{\mathbf{t}}
\def\CE{\mathrm{CE}}
\def\VE{\mathrm{VE}}
\def\H{\mathcal{H}}
\def\D{\mathcal{D}}
\def\C{\mathcal{C}}
\def\R{\mathcal{R}}
\def\P{\mathcal{P}}
\newcommand{\ra}[1]{\renewcommand{\arraystretch}{#1}}
\begin{document}

% \title[short text for running head]{full title}
\title[Staggered finite difference finite volume schemes]{Error
  analysis of staggered finite difference finite volume 
  schemes on unstructured 
  meshes}

%    Only \author and \address are required; other information is
%    optional.  Remove any unused author tags.

%    author one information
% \author[short version for running head]{name for top of paper}
\author[Q.~Chen]{Qingshan Chen}
\address{Department of Mathematical Sciences, Clemson University,
  Clemson, SC 29631, USA.}
%\curraddr{}
\email{qsc@clemson.edu}
\thanks{This work is in part supported by a grant from the Simons
  Foundation (\#319070 to Qingshan Chen).  
}

%    \subjclass is required.
\subjclass[2010]{65N06, 65N08, 65N12, 65N22, 76D07}
\keywords{staggered-grid, MAC, incompressible Stokes, finite
  difference, finite volume, unstructured meshes, a priori error
  estimate} 
\date{\today}

%\dedicatory{}

%    Abstract is required.
\begin{abstract}
This work combines the consistency in lower-order differential
operators with external approximations of functional spaces to obtain
error estimates for finite difference finite volume schemes on
unstructured non-uniform meshes. This combined approach is first
applied to the one-dimensional elliptic boundary value problem on
non-uniform meshes, and a first-order convergence rate is obtained,
which agrees with the results previously reported. The approach is
also applied to the staggered MAC scheme for the two-dimensional
incompressible Stokes problem on unstructured meshes. A
first-order convergence rate is obtained, which improves over
a previously reported result in that it also holds on unstructured
meshes. For both problems considered in this work, the convergence
rate is one order higher on meshes satisfying special requirements. 
\end{abstract}

\maketitle

%    Text of article.
\section{Introduction}
Staggered finite difference finite volume schemes (FDFV) place scalar
variables and vectorial variables at different grid points in a
staggered pattern. They inherit all the desirable conservative
properties from the finite volume method. The staggered placement of
the scalar and vectorial variables also render the schemes
particularly suitable for transport phenomena. A classical staggered
scheme is the famous Marker-and-Cell (MAC) scheme proposed by Harlow
and Welch (\cite{Harlow:1965jv}). The MAC scheme is considered the
method of choice for incompressible flows (see a comprehensive
treatment of this topic by Wesseling (\cite{Wesseling:2001ci}). Since
its introduction, it has also been argued that the MAC scheme is
suitable for flows of all speeds
(\cite{Harlow:1968ff,{Harlow:1971iv}}). A variant of the MAC scheme,
the so-called C-grid scheme, has enjoyed wide popularity in the ocean
and atmosphere modeling community for its superior performance in
resolving the inertial-gravity waves
(\cite{Arakawa:1977wr,{Thuburn:2009tb},{Ringler:2010io}}). See 
  \cite{Chen:2013bl,{Chen:2016ip}} for additional variants of the
  C-grid scheme. 

Compared to the finite element method, the analysis of the finite
volume method is less abundant, likely due to the lack of a natural
variational framework, \textcolor{rev}{as finite volume schemes are
  typically not written in the variational form.}
%  But variational
% formulations of the finite volume schemes have been used before for
% the convergence analysis; see \cite{Faure:2006ky,Gie:2010vy,
%   Droniou2013-m, Gie:2015tw}.}  
 However, due to the popularity of the finite
volume methods in engineering and geophysical applications,
theoretical analysis of these methods to determine their stability,
convergence and accuracy is much needed. 
% Various authors have
% taken this task.
Through a mapping from the trial function space to
the test function space, Li et al (\cite{Li:2000uz}) cast
non-staggered generalized finite difference schemes into a Galerkin
variational framework, where the error analysis can be performed in
the same way as
for the finite element methods (\cite{Chou:2001gi}). 
 Temam and coauthors employ functional
analytical tools to study the convergence of non-staggered finite
volume schemes on structured and quasi-structured meshes
(\cite{Faure:2006ky,{Adamy:2006hi}, {Gie:2010vy}, {Gie:2015tw}}),
\textcolor{rev}{by constructing external 
approximations of function 
spaces (\cite{Cea:1964vy}) and transforming the schemes into
the variational forms.}
 Chen
(\cite{Chen:2016uw}) extends this approach to staggered finite
difference finite volume schemes on  unstructured
meshes. Relying heavily on functional analytical tools, but without
explicit approximations of functional spaces, Eymard et al
(\cite{Eymard:2000tt}) study the convergence and accuracy for
non-staggered finite volume methods on structured and
unstructured meshes. \textcolor{rev}{Droniou et al
  (\cite{Droniou2013-mq}) establishes 
the convergence of a class of gradient schemes written in the discrete
variational formulations.  }

The current work is concerned with the error analysis of staggered
finite volume schemes on unstructured meshes. As
highlighted by Eymard et al (\cite{Eymard:2000tt}), a major hurdle to
deriving error estimates for finite volume scheme is the fact that
finite volume schemes are often not consistent with the differential
equations they are approximating. The authors show that, actually,
consistency in approximations of the fluxes is enough for error
estimates in many cases. The authors apply this idea to derive error
estimates for non-staggered finite volume schemes for linear elliptic
and parabolic problems. For scalar linear hyperbolic problems, the
finite volume schemes are consistent with the continuous equation, and
their error estimates can be handled in the same way as the finite
difference schemes. Error estimates for staggered finite volume
schemes are scarce. Nicolaides (\cite{Nicolaides:1992vs}) proves that
the MAC scheme for the incompressible Stokes problem on a rectangular
mesh is first-order accurate in both $L^2$ and $H^1$ norm. 

The main thrust of this work is to show that, in the spirit of
\cite{Eymard:2000tt}, consistency in lower-order differential
operators, such as derivative or curl, is sufficient for error
estimates in many cases. Here, we combine the consistency in
lower-order operators with the external approximation framework to
derive error estimates for staggered FDFV schemes. 
% Here, we combine the consistency-in-flux concept with the external
% approximation framework used in \cite{Chen:2016uw}, to derive error
% estimates for staggered finite volume schemes on unstructured
% meshes. 
To illustrate how this combined approach works, we first apply
it to a simple one-dimensional elliptic problem, and recover the
results already presented in \cite{Eymard:2000tt}. Then, in Section 3,
we apply the combined approach to the MAC scheme for the Stokes
problem on unstructured meshes. We show that the scheme is first-order
accurate in both $L^2$ and $H^1$ norms, and in certain special cases,
including the rectangular meshes, the scheme is second-order accurate
under both $L^2$ and $H^1$ norms.

\section{The one-dimensional elliptic
  problem}\label{sec:one-dimens-ellipt}
We consider the one-dimensional elliptic problem
\begin{align}
  &-u_{xx} = f,\qquad 0< x < 1,\label{eq:1}\\
  & u(0 ) = u(1) = 0.\label{eq:2}
\end{align}
We define the function spaces
\begin{align*}
  V &= H_0^1(0,1),\\
  V' &= H^{-1}(0,1).
\end{align*}
For $u,\,v \in V$, we define the bilinear operator
\begin{equation*}
  a(u,\,v) = (u_x,\, v_x).
\end{equation*}
Here, $(\cdot,\,\cdot)$ denotes the inner product of the $L^2(0,1)$
space.
The weak formulation of the elliptic problem
\textcolor{rev}{\eqref{eq:1}--\emph{\eqref{eq:2}}} can now be stated:
\begin{quote}
  For each $f\in V'$, find $u\in V$ such that
  \begin{equation}
    \label{eq:3}
    a(u,\,v) = \langle f,\,v\rangle,\qquad \forall\,\, v\in V.
  \end{equation}
\end{quote}
If $u$ is a classical solution of the elliptic problem
\eqref{eq:1}--\eqref{eq:2}, then $u$ 
satisfies \eqref{eq:3} as well. In this sense, \eqref{eq:3} is a weak
formulation of \eqref{eq:1}--\eqref{eq:2}. The existence and
uniqueness of 
a solution to the weak problem is now part of the classical elliptic
PDE theory, which is based on the fact that the
bilinear operator $a(\cdot,\cdot)$ is coercive. The classical theory
also provides that, if the right-hand side forcing $f$ is smooth, then
the solution is also smooth. 

\subsection{External approximation of $V$ and the finite volume
  scheme}\label{sec:extern-appr-v} 
\begin{figure}[h]
  \centering
  \includegraphics[width=5.5in]{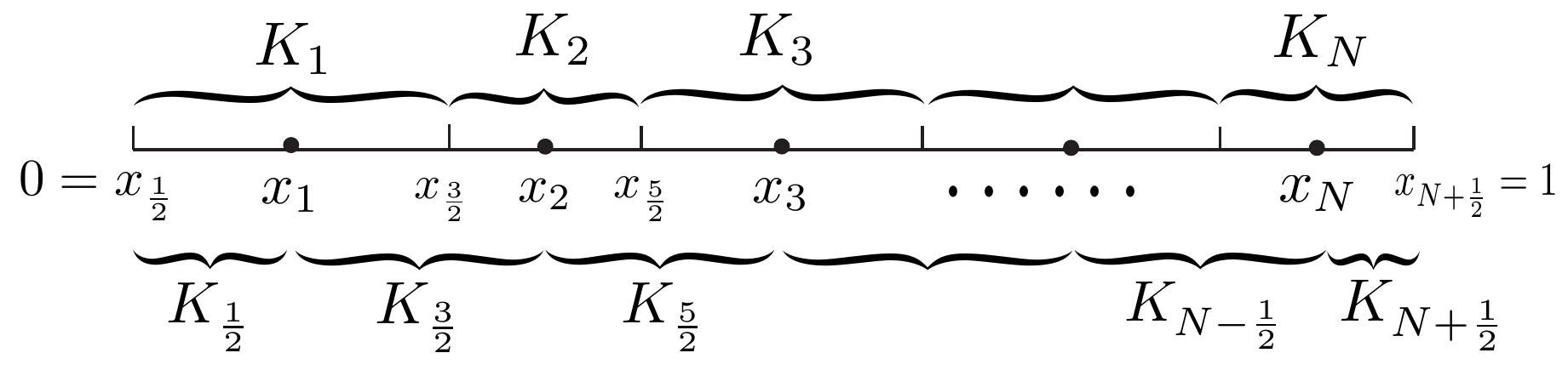}
  \caption{One-dimensional non-uniform mesh}
  \label{fig:1d-mesh}
\end{figure}

The interval $[0,\,1]$ is
partitioned into $N$ subintervals $K_i$, $1\leq i\leq
N$, which are centered at $x_i$, and have lengths of $h_i$ (Figure
\ref{fig:1d-mesh}). The duals of $K_i$ are 
$K_{i+\frac{1}{2}}$, $0\leq i\leq N$, with the corresponding centers
$x_{i+\frac{1}{2}}$, and lengths $h_{i+\frac{1}{2}}$. Here the term
``center'' is used in its liberal sense; $x_i$
(resp.~$x_{i+\frac{1}{2}}$) is not necessarily the midpoint of $K_i$
(resp.~$K_{i+\frac{1}{2}}$).  The formal notations
about the mesh are given below:
\begin{align*}
  &x_i \in K_i = [x_{i-\frac{1}{2}},\, x_{i+\frac{1}{2}}],\qquad  1\leq i \leq
  N\\ 
%  &x_i \in K_i\\
  &x_{i+\frac{1}{2}} \in K_{i+\frac{1}{2}} = [x_i,\,x_{i+1}]\textcolor{rev}{,} \qquad
  1\leq i\leq N-1\\ 
  &x_\frac{1}{2} \in K_{\frac{1}{2} } = [x_{\frac{1}{2}},\, x_1],\quad
  x_{N+\frac{1}{2}}\in K_{N+\frac{1}{2}}
  = [x_N,\,x_{N+\frac{1}{2}}],\\
  &h_i = |K_i|,\qquad 1\leq i\leq N,\\
  &h_{i+\frac{1}{2}} = |K_{i+\frac{1}{2}}|,\qquad 0\leq i\leq N.
\end{align*}
\textcolor{rev}{We are interested in non-uniform meshes, and thus the
  mesh sizes $h_i$
and $h_{i+\frac{1}{2}}$ are non-constant. However, for regularity
  reasons, we assume that the ratio between the largest mesh size and
  the smallest mesh size is bounded. More precisely, we assume that,
  for each mesh, there exists $h>0$ such that
  \begin{equation*}
    m h < h_i, h_{i+\frac{1}{2}} < Mh,
  \end{equation*}
where $m$ and $M$ are constants that are independent of the meshes. In
what follows, each mesh is represented by a subscript ${}_h$, and the
total set of admissible meshes is designated as $\mathcal{H}$.}

We let
\begin{equation*}
  F = L^2(0,\,1)\times L^2(0,\,1). 
\end{equation*}
We define an isomorphism $\Pi$ from $V$ into $F$ as
\begin{equation}
  \label{eq:4}
  \Pi u = (u,\,u_x)\in F,\qquad \forall\, u \in V,
\end{equation}
\textcolor{rev}{where $u_x$ designates the first derivative of the
  function.} 
For each $h\in\mathcal{H}$,
the space $V$ is approximated by $V_h$ defined on the primary mesh and
$\tilde V_h$ defined on the dual mesh,
\begin{align*}
  V_h =& \left\{ u_h = \sum_{i=1}^N u_i
    \chi_i\right\}\textcolor{rev}{,}\\ 
  \tilde V_h =& \left\{ \tilde u_h = \sum_{i=0}^N  \tilde
      u_{i+\frac{1}{2}} \chi_{i+\frac{1}{2}} \right\}.\\
\end{align*}
Here, $\chi_i$ and $\chi_{i+\frac{1}{2}}$ are the characteristic
functions defined on the mesh cells $K_i$ and $K_{i+\frac{1}{2}}$,
respectively. 
We define the restriction operator $\R_h$ from $V$ into $V_h$,
\begin{equation}
\label{eq:70}
  \R_h u = \sum_{i=1}^N u(x_i) \chi_i\in V_h,\qquad \forall u\in V. 
\end{equation}
We note that this definition makes sense because $H^1_0 (0,\,1)
\subset \C[0,\,1]$. 

\begin{remark}
\textcolor{rev}{Some authors distinguish between finite difference
  (FD) and finite 
volume (FV)
schemes by whether the discrete variables are defined as the pointwise
values of 
the corresponding continuous variables (FD), or as  the area/volume
averaged values of the corresponding
continuous variables (FV); see \cite{Faure:2006ky}. 
However, taking the discrete variable as the area/volume average of
the continuous variable can lead to serious consistency issues,
especially on non-uniform grids, as pointed out by \cite{Eymard:2000tt}. 
We follow the latter reference and distinguish FD and FV schemes by
how the differential operators are 
discretized. If the differential operators are discretized by Green's
or the divergence theorems, then the scheme is FV; if instead, all the
individual derivatives are discretized by Taylor series expansion,
then the scheme is FD. In fact, in most thus-defined FV schemes, the
FD formulae are also used on certain parts of the equations, or on the
fluxes that are part of the FV discretizations of  the differential
operators. For this reason, we sometimes call 
these schemes mixed finite difference finite volume schemes (FDFV,
e.g.~\cite{Ringler:2010io, Chen:2013bl}).  }
\end{remark}

\begin{remark}\textcolor{rev}{
  Our scheme, to be given later, will be classified as a finite
  difference scheme according to the definition adopted by
  \cite{Faure:2006ky}, because the discrete variable is viewed as the
  pointwise value of the continuous variable \eqref{eq:70}. However,
  the 
  one-dimensional grid that is used in this section is identical to
  the cell-centered FV grid, and differs from the FD grid in the same
  reference.  Our own definition regrading FD and FV schemes, given in the
   preceding remark, does not make
   a difference on the problem under consideration in the
   one-dimensional space.
  Due to the technical differences with \cite{Faure:2006ky} in the
  meshes specifications and for the sake of 
  completion, we provide full details on 
  specifications and 
  convergence analysis of the scheme below, while we acknowledge that these
  developments mostly parallel those in the cited reference, and
  differ in technical details.  
  The error analysis part (Section \ref{sec:error-estimates}) is new.}
\end{remark}

We define the gradient operators $\nabla_h$ for
$V_h$,
\begin{equation}
  \nabla_h u_h = \sum_{\textcolor{rev}{i=0}}^N \dfrac{u_{i+1}- u_i}{h_{i+\frac{1}{2}}}
  \chi_{i+\frac{1}{2}} \in \tilde V_h,\qquad\forall u\in V.\label{eq:5}
\end{equation}
The gradient operator is well-defined throughout the whole interval
under the convention that 
$$u_0 = u_{N+1} =0,$$ 
which agrees with the
homogeneous Dirichlet boundary conditions of $V$. 
The gradient operator $\tilde\nabla_h$ on the dual function space
$\tilde V_h$ is defined as
\begin{equation}
  \tilde\nabla_h \tilde u_h = \sum_{i=1}^N \dfrac{\tilde
    u_{i+\frac{1}{2}} - \tilde u_{i-\frac{1}{2}}}{h_i} \chi_i\in
    V_h,\qquad \forall\, \tilde u_h \in V_h.\label{eq:6}
\end{equation}
The space $V_h$ is equipped with the semi-$H^1$ norm,
\begin{equation*}
  \| u_h \| = |\nabla_hu_h|_0.
\end{equation*}
In fact, due to the Poincar\'e inequality that we will establish
later, $V_h$ is a Hilbert space under this norm.

Finally, we define the prolongation operator $\P_h$ from $V_h$ into
$F$,
\begin{equation}
  \label{eq:7}
  \P_h u_h = (u_h, \nabla_h u_h)\in F,\qquad \forall\, u_h \in V_h. 
\end{equation}

For each $h\in\H$, the external approximation of the function space
$V$ consists of  the function space $F$, the isomorphic mapping $\Pi$,
and the triplet $(V_h,\,\R_h,\,\P_h)$. 
Based on the specification of the norms of $V_h$ and $F$, it is
straightforward to verify the following result.
\begin{lemma}\label{lem:stability}
  The prolongation operator $\P_h$ from $V_h$ into $F$ is stable.
\end{lemma}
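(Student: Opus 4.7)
The goal is to show that there exists a constant $C$, independent of the mesh $h \in \H$, such that
\[
  |\P_h u_h|_F^2 = |u_h|_0^2 + |\nabla_h u_h|_0^2 \leq C \, \|u_h\|^2 = C\,|\nabla_h u_h|_0^2
\]
for every $u_h \in V_h$. Since the $|\nabla_h u_h|_0^2$ term on the left is already controlled by the right-hand side, the whole content of the lemma reduces to establishing a discrete Poincar\'e inequality of the form $|u_h|_0 \leq C\,|\nabla_h u_h|_0$.

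My plan is as follows. First, I would write $u_h = \sum_{i=1}^N u_i \chi_i$ and, using the boundary convention $u_0 = u_{N+1} = 0$ imposed in the definition of $\nabla_h$, express each nodal value as a telescoping sum of differences:
\[
  u_i = \sum_{j=0}^{i-1} (u_{j+1} - u_j) = \sum_{j=0}^{i-1} h_{j+\frac{1}{2}}\,(\nabla_h u_h)_{j+\frac{1}{2}}.
\]
Then I would apply the Cauchy--Schwarz inequality to bound $u_i^2$ by a product of $\sum_{j=0}^{N} h_{j+\frac{1}{2}}$ (which is bounded by a constant depending only on the unit length interval) and $\sum_{j=0}^{N} h_{j+\frac{1}{2}} |(\nabla_h u_h)_{j+\frac{1}{2}}|^2 = |\nabla_h u_h|_0^2$. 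Multiplying by $h_i$, summing over $i$, and again using that $\sum_i h_i \leq 1$ yields $|u_h|_0^2 \leq C\,|\nabla_h u_h|_0^2$, which is the desired Poincar\'e inequality.

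Combining this with the trivial bound $|\nabla_h u_h|_0^2 \leq |\nabla_h u_h|_0^2$ gives
\[
  |\P_h u_h|_F^2 \leq (C+1)\,|\nabla_h u_h|_0^2 = (C+1)\,\|u_h\|^2,
\]
and the constant $C+1$ depends only on the unit interval and the mesh regularity constants $m,M$, not on the particular $h \in \H$. This is the definition of stability for $\P_h$.

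The only mildly delicate step is the Poincar\'e inequality itself, and even there the one-dimensional setting together with the homogeneous boundary convention makes the telescoping argument essentially mechanical; the mesh regularity hypothesis $mh < h_i, h_{i+\frac{1}{2}} < Mh$ is not even needed for this particular bound, since the sum of all dual lengths is bounded by $1$ directly. No subtler obstacle is foreseen.
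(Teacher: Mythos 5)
Your proof is correct and takes essentially the same route as the paper: the paper states the lemma as ``straightforward to verify'' from the norm specifications, and the key ingredient you identify --- the discrete Poincar\'e inequality $|u_h|_0 \leq C|\nabla_h u_h|_0$ --- is proved in Section~2.2 by exactly the telescoping-plus-Cauchy--Schwarz argument you outline. No issues.
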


In subsequent analyses, we will need the following discrete
integration by parts formula,
\begin{lemma}
  Let $u_h\in V_h$, $v_h\in \tilde V_h$. Then
  \begin{equation}
    \label{eq:15}
    (\nabla_h u_h,\, v_h) = -(u_h,\,\tilde\nabla_h v_h).
  \end{equation}
\end{lemma}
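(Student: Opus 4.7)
The plan is to reduce the identity to a one-line Abel summation. First I would expand both inner products explicitly, using the fact that the $L^2$ inner product of two step functions supported on the same mesh cell picks up exactly the measure of that cell. Since $\nabla_h u_h$ and $v_h$ both live on the dual mesh and are constant on each $K_{i+\frac{1}{2}}$ of length $h_{i+\frac{1}{2}}$,
\begin{equation*}
  (\nabla_h u_h,\,v_h) = \sum_{i=0}^N \frac{u_{i+1}-u_i}{h_{i+\frac{1}{2}}}\,v_{i+\frac{1}{2}}\,h_{i+\frac{1}{2}} = \sum_{i=0}^N (u_{i+1}-u_i)\,v_{i+\frac{1}{2}},
\end{equation*}
and similarly, since $u_h$ and $\tilde\nabla_h v_h$ are constant on $K_i$ of length $h_i$,
\begin{equation*}
  (u_h,\,\tilde\nabla_h v_h) = \sum_{i=1}^N u_i\,(v_{i+\frac{1}{2}}-v_{i-\frac{1}{2}}).
\end{equation*}
The helpful cancellation is that the $h_{i+\frac{1}{2}}$ and $h_i$ factors drop out, so the identity is purely combinatorial.

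Next I would split the right-hand side of the first sum into two pieces, $\sum_{i=0}^N u_{i+1} v_{i+\frac{1}{2}} - \sum_{i=0}^N u_i v_{i+\frac{1}{2}}$, and shift the index $j=i+1$ in the first piece. This is the standard Abel summation by parts step. The boundary terms produced by the shift involve $u_0 v_{\frac{1}{2}}$ and $u_{N+1} v_{N+\frac{1}{2}}$, and here is where the convention $u_0 = u_{N+1} = 0$ coming from the homogeneous Dirichlet boundary data encoded in the definition of $\nabla_h$ kicks in to annihilate those boundary contributions. What remains is exactly $-\sum_{i=1}^N u_i(v_{i+\frac{1}{2}}-v_{i-\frac{1}{2}})$, matching $-(u_h,\,\tilde\nabla_h v_h)$.

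I do not anticipate a genuine obstacle here: the statement is essentially the discrete analog of integration by parts on an interval with vanishing Dirichlet traces, and the staggered placement of the unknowns makes the index bookkeeping match up cleanly. The one subtle point worth making explicit in the write-up is that the boundary cells $K_{\frac{1}{2}}$ and $K_{N+\frac{1}{2}}$ participate in the dual-mesh sum for $(\nabla_h u_h,\,v_h)$, and it is precisely the Dirichlet convention built into \eqref{eq:5} that allows these boundary contributions to vanish after index shifting, so that no boundary correction term appears on the right-hand side of \eqref{eq:15}.
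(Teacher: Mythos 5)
Your proposal is correct and follows essentially the same route as the paper: expand both inner products so the mesh-length factors cancel, perform the discrete summation by parts (index shift), and invoke the convention $u_0 = u_{N+1} = 0$ to kill the boundary contributions. The paper's proof is exactly this computation, so no further comparison is needed.
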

\begin{proof} By definition,
  \begin{align*}
    (\nabla_h u_h,\, v_h) &= \sum_{i=0}^N
    \dfrac{u_{i+1}-u_i}{h_{i+\frac{1}{2}}} v_{i+\frac{1}{2}}
    h_{i+\frac{1}{2}}\\ 
    &= \sum_{i=0}^N (u_{i+1} - u_i)
    v_{i+\frac{1}{2}}\textcolor{rev}{.}
  \end{align*}
Using the conventions on the boundary terms $u_0$ and $u_{N+1}$, we
can rewrite the summation as 
  \begin{align*}
    (\nabla_h u_h,\, v_h) &= \sum_{i=1}^N  u_i
    (v_{i-\frac{1}{2}} - v_{i+\frac{1}{2}})\\
    & = -\sum_{i=1}^N u_i \dfrac{v_{i+\frac{1}{2}} -
      v_{i-\frac{1}{2}}}{h_i} h_i\\
    &= -(u_h,\,\tilde\nabla_h v_h).
  \end{align*}
\end{proof}

The convergence of external approximations to a function space has two
conditions. The first states that, for any $u\in V$, $\P_h\R_h u$
should converge to $\Pi u$ under the strong topology of $F$. This
condition ensures the consistency of the restriction and the
projection operators. The other condition requires that if $\P_h u_h$
convergences weakly to $\omega$ in $F$, then $\omega$ is the image of
an element of $V$. For the external approximation we have just
constructed, the following lemma confirms that both of these convergence
conditions are true.  
\begin{lemma}\label{lem:convergences}
 For the external approximation, the following hold
  true.\\ 
  \begin{enumerate}[({C}1)]
  \item For arbitrary $u\in V$,
    \begin{equation}
      \label{eq:8}
      \P_h \R_h u \longrightarrow \Pi u\qquad\textrm{strongly in } F.
    \end{equation}
  \item Let $\{u_h\}$ be a sequence in $F$. If
    \begin{equation*}
      \P_h u_h\rightharpoonup w\qquad\textrm{weakly in } F,
    \end{equation*}
    then, for some $u\in V$,
    \begin{equation*}
      w = \Pi u.
    \end{equation*}
  \end{enumerate}
\end{lemma}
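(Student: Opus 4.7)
The plan is to treat the two conditions separately, since (C1) is essentially an approximation/consistency statement while (C2) is a compactness/closure statement; for both, the key technical tool is the discrete integration by parts formula \eqref{eq:15} together with the stability of $\P_h$ from Lemma \ref{lem:stability} and the mesh regularity $m h \leq h_i,\, h_{i+1/2} \leq M h$.

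For (C1), I would first establish the result on a dense subspace, say $\C^2_0[0,1]$, and then extend by a density argument. Fix $u\in\C^2_0[0,1]$. The first component of $\P_h\R_h u$ is the piecewise-constant sampling $\sum_i u(x_i)\chi_i$; its $L^2$-distance to $u$ is bounded by $M h\,\|u_x\|_\infty$ via a cellwise mean-value argument, hence it tends to $0$. The second component is $\nabla_h\R_h u = \sum_i \frac{u(x_{i+1})-u(x_i)}{h_{i+1/2}}\chi_{i+1/2}$, which, on each $K_{i+1/2}$, differs from $u_x$ by $O(h\,\|u_{xx}\|_\infty)$ by a Taylor expansion around a suitable point in $K_{i+1/2}$ (using the convention $u(x_0)=u(x_{N+1})=0$, which is consistent with $u\in\C_0^2$). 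Together these yield strong $F$-convergence for smooth $u$. To pass from $\C^2_0[0,1]$ to all of $V$, I invoke Lemma \ref{lem:stability}: given $u\in V$ and $\varepsilon>0$, pick $\tilde u\in \C_0^2$ with $\|\Pi u-\Pi\tilde u\|_F<\varepsilon$, write
\begin{equation*}
\P_h\R_h u - \Pi u = \P_h\R_h(u-\tilde u) + (\P_h\R_h\tilde u - \Pi\tilde u) + \Pi(\tilde u - u),
\end{equation*}
and control the first term by stability, the second by the smooth case, and the third by the choice of $\tilde u$.

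For (C2), write $w=(w_0,w_1)\in F$ so that $u_h\rightharpoonup w_0$ and $\nabla_h u_h\rightharpoonup w_1$ in $L^2(0,1)$. The goal is to identify $w_0\in H_0^1(0,1)$ with $w_0' = w_1$. The natural test class is $\C^\infty([0,1])$, not just $\C_c^\infty(0,1)$, because we need to recover the Dirichlet boundary conditions. For $\phi\in \C^\infty([0,1])$, form its dual-mesh interpolant $\tilde\phi_h = \sum_{i=0}^N\phi(x_{i+1/2})\chi_{i+1/2}\in\tilde V_h$; the argument from (C1) gives $\tilde\phi_h\to\phi$ and $\tilde\nabla_h\tilde\phi_h\to\phi_x$ strongly in $L^2$. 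Applying \eqref{eq:15},
\begin{equation*}
(\nabla_h u_h,\tilde\phi_h) = -(u_h,\tilde\nabla_h\tilde\phi_h),
\end{equation*}
and passing to the limit (weak-times-strong on both sides) gives $(w_1,\phi) = -(w_0,\phi_x)$ for every $\phi\in\C^\infty([0,1])$. Restricting to $\phi\in\C_c^\infty(0,1)$ identifies $w_0\in H^1(0,1)$ with $w_0' = w_1$. Then integration by parts in the continuous setting yields $w_0(1)\phi(1)-w_0(0)\phi(0)=0$ for every $\phi\in\C^\infty([0,1])$, forcing $w_0(0)=w_0(1)=0$, so $w_0\in V$ and $w=\Pi w_0$.

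The main obstacle I anticipate is the boundary-condition step in (C2): one must test against functions that do \emph{not} vanish at the endpoints, which in turn requires verifying \eqref{eq:15} yields no boundary contribution on such test functions — this is exactly where the conventions $u_0=u_{N+1}=0$ built into the definition of $\nabla_h$ pay off, since the telescoped boundary terms $u_{N+1}\phi(x_{N+1/2})-u_0\phi(x_{1/2})$ in the Abel summation vanish identically. A secondary subtlety is ensuring that the $L^2$-convergence of the interpolants $\tilde\phi_h$ and their discrete derivatives is uniform across $h\in\H$; this is where the two-sided mesh-regularity bound $mh\leq h_i,h_{i+1/2}\leq Mh$ is used to convert pointwise Taylor-remainder estimates into quantitative $L^2$ bounds independent of the particular mesh.
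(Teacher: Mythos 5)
Your proof is correct and, in its overall architecture (Taylor expansion on a dense subspace plus a density argument for (C1); discrete integration by parts \eqref{eq:15} against dual-mesh interpolants of smooth test functions for (C2)), coincides with the paper's. The one genuine divergence is how the boundary condition, i.e.\ $u\in H_0^1$ rather than merely $H^1$, is recovered in (C2). The paper extends $u$ and $w$ by zero to $u^\#, w^\#$ on $\mathbb{R}$, reruns the duality argument to get $(u^\#)_x=w^\#$ in the distributional sense on $\mathbb{R}$, and concludes that $u^\#\in H^1(\mathbb{R})$ vanishes outside $(0,1)$, hence $u\in H_0^1(0,1)$. You instead test directly against $\phi\in\C^\infty([0,1])$ that need not vanish at the endpoints, observe that \eqref{eq:15} produces no boundary terms because of the conventions $u_0=u_{N+1}=0$ built into $\nabla_h$, and read off $w_0(0)=w_0(1)=0$ from the mismatch with the continuous integration-by-parts identity. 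These are two faces of the same fact --- the paper's ``same procedure'' for the zero extensions implicitly also requires testing against functions whose restrictions to $[0,1]$ do not vanish at the endpoints --- but your version makes that step explicit and avoids the extension machinery, which is arguably cleaner in one dimension. One small point worth writing out in either version: the density step in (C1) needs a uniform-in-$h$ bound such as $|\nabla_h\R_h(u-\tilde u)|_0\le |(u-\tilde u)_x|_0$, which follows from Cauchy--Schwarz applied to $\frac{u(x_{i+1})-u(x_i)}{h_{i+1/2}}=\frac{1}{h_{i+1/2}}\int_{x_i}^{x_{i+1}}u_x\,dx$; Lemma \ref{lem:stability} by itself (stability of $\P_h$ from $V_h$ into $F$) does not supply the required control of $\R_h$ from $V$ into $V_h$.
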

\begin{proof}
  For $(C1)$,
  \begin{align*}
    \R_h u &= \sum_{i=1}^N u(x_i) \chi_i,\\
    \P_h \R_h u &= \left(\R_h u,\,\nabla_h\R_h u\right)\\
    &= \left(\sum_{i=1}^N u(x_i) \chi_i, \,\sum_{i=0}^N
      \dfrac{u(x_{i+1}) - u_{x_i}}{h_{i+\frac{1}{2}}}
      \chi_{i+\frac{1}{2}}\right).
  \end{align*}
As an easy application of the Taylor series expansion, it can be shown
that, if $u\in\D(0,1)$, then 
\begin{align*}
  \R_h u& \longrightarrow u\textcolor{rev}{,} & &\textrm{strongly in } L^2(0,1),\\
  \nabla_h \R_h u& \longrightarrow u_x\textcolor{rev}{,} &
  &\textrm{strongly in }  L^2(0,1)\textcolor{rev}{.}\\ 
\end{align*}
Then the definition of $\R_h$ can be extended to the whole space $V$
and the above convergence still holds. 

For $(C2)$, the weak convergence 
\begin{equation*}
  \P_h u_h \rightharpoonup (u,w)\textrm{ in } F
\end{equation*}
implies that
\begin{align}
  u_h & \rightharpoonup u\textrm{ in } L^2(0,1),\label{eq:10}\\
  \nabla_h u_h & \rightharpoonup w\textrm{ in }
  L^2(0,1).\label{eq:11} 
\end{align}
Let $v\in\D(0,1)$, and $\tilde\R_h v = \sum_{i=1}^N
v(x_{i+\frac{1}{2}})\chi_{i+\frac{1}{2}}$. It can be shown as in part
(a) that 
 \begin{equation}
   \left(\tilde\R_h v,\, \tilde\nabla_h\tilde\R_h v\right)\longrightarrow
     (v,\,v_x)\qquad \textrm{strongly in } F. \label{eq:12}
 \end{equation}
By the integration by parts formula, we have
\begin{equation}
  \label{eq:9}
  \left(\nabla_h u_h,\,\tilde\R_h v\right) = -\left( u_h,\,
    \tilde\nabla_h\tilde\R_h v\right).
\end{equation}
By the weak convergences in \eqref{eq:10}-~\eqref{eq:11} and the
strong convergences in \eqref{eq:12}, we obtain that
\begin{equation}
  \label{eq:13}
  (w,\,v) = -(u,\,v_x),\qquad\textrm{for any } v\in \D(0,1).
\end{equation}
This shows that 
\begin{equation}
  \label{eq:14}
  \dfrac{\p u}{\p x} = w\qquad\textrm{ in } \D'(0,1).
\end{equation}
Let $u^\#$ be the extension of $u$ by zero outside $(0,1)$, and let
$w^\#$ be the extension of $w$ by zero outside $(0,1)$. It can be
shown by the same procedure that
\begin{equation*}
  \dfrac{\p u^\#}{\p x} = w^\# \qquad\textrm{ in } \D'(0,1).
\end{equation*}
Thus $u^\#\in H^1(\mathbb{R}).$
The fact that $u^\#=0$ outside the interval $(0,1)$ indicates that
$u^\#$ vanishes on the boundary. Thus,
\begin{equation*}
  u = u^\#\bigr|_{(0,1)} \in H^1_0(0,1),
\end{equation*}
and the claim is proven.
\end{proof}

We can now state the finite volume scheme for the 1D elliptic boundary
value problem \eqref{eq:1}--\eqref{eq:2}:
\begin{quote}
  For each $f\in L^2(0,1)$, let $f_i = \frac{1}{h_i}\int_{K_i} fdx$
  and $f_h = \sum_{i=1}^N f_i\chi_i$. Find $u\in V_h$ such that
  \begin{equation}
    \label{eq:16}
    -\tilde\nabla_h\nabla_h u_h = f_h.
  \end{equation}
\end{quote}
By the integration by parts formula \eqref{eq:15}, we obtain the
variational form of the scheme,
\begin{equation}
  \label{eq:17}
  \left(\nabla_h u_h,\, \nabla_h v_h\right) = \left(f,\,
    v_h\right),\qquad \forall v_h\in V_h.
\end{equation}

\subsection{Existence and uniqueness of a
  solution}\label{sec:exist-uniq-solut}
We first establish the discrete Poincar\'e inequality.
\begin{lemma}
  If $u_h\in V_h$, then
  \begin{equation}
    \label{eq:18}
    |u_h|_0 \leq C|\nabla_h u_h|_0
  \end{equation}
for some $C>0$ independent of $h$.
\end{lemma}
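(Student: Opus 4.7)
The plan is to exploit the boundary convention $u_0 = u_{N+1} = 0$ via a telescoping representation, then apply Cauchy--Schwarz weighted by the dual mesh sizes. The hypothesis on the mesh regularity (the bounds $mh < h_i, h_{i+1/2} < Mh$) is not actually needed for this particular lemma, since the fact that both the primary and dual meshes tile $[0,1]$ already gives a dimensionless constant.

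First, I would fix $u_h = \sum_{i=1}^N u_i\chi_i \in V_h$ and, using $u_0 = 0$, write for each $1\leq i \leq N$
\begin{equation*}
u_i \;=\; \sum_{j=1}^{i}(u_j - u_{j-1}) \;=\; \sum_{j=1}^{i} h_{j-\frac{1}{2}}\,(\nabla_h u_h)_{j-\frac{1}{2}},
\end{equation*}
where the second equality is just the definition \eqref{eq:5} of $\nabla_h$ at each dual node. Applying the Cauchy--Schwarz inequality with weights $h_{j-\frac{1}{2}}^{1/2}\cdot h_{j-\frac{1}{2}}^{1/2}$ gives
\begin{equation*}
|u_i|^2 \;\leq\; \Biggl(\sum_{j=1}^{i} h_{j-\frac{1}{2}}\Biggr)\Biggl(\sum_{j=1}^{i} h_{j-\frac{1}{2}}\,|(\nabla_h u_h)_{j-\frac{1}{2}}|^2\Biggr).
\end{equation*}

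Second, since the dual cells $K_{j-\frac{1}{2}}$ partition $[0,1]$, one has $\sum_{j=1}^{i} h_{j-\frac{1}{2}} \leq \sum_{j=0}^{N} h_{j+\frac{1}{2}} = 1$, and the second factor is bounded above by $|\nabla_h u_h|_0^2$. Hence $|u_i|^2 \leq |\nabla_h u_h|_0^2$ uniformly in $i$. Finally, I would sum over $i$ against the primary weights $h_i$, again using that these sum to $1$:
\begin{equation*}
|u_h|_0^2 \;=\; \sum_{i=1}^{N} h_i\,|u_i|^2 \;\leq\; |\nabla_h u_h|_0^2 \sum_{i=1}^{N} h_i \;=\; |\nabla_h u_h|_0^2,
\end{equation*}
yielding the inequality with constant $C=1$.

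There is no real obstacle here; the only subtlety is making sure that the telescoping sum uses the homogeneous boundary convention at $i=0$, which is exactly why $V_h$ was built with that convention baked into the definition of $\nabla_h$. If one wanted a sharper version using only one of the two boundary conditions, one could symmetrize by telescoping from both ends and taking the smaller of the two bounds, but for the stated inequality the one-sided telescoping suffices.
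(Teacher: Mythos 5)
Your proof is correct and follows essentially the same route as the paper's: telescoping $u_i$ from the boundary using $u_0=0$, applying Cauchy--Schwarz weighted by the dual mesh sizes $h_{j-\frac{1}{2}}$, bounding $\sum_j h_{j-\frac{1}{2}}$ and $\sum_i h_i$ by $1$, and obtaining the constant $C=1$. Your observation that the quasi-uniformity assumption is not needed here is also consistent with the paper's argument, which likewise does not invoke it.
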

\begin{proof} The procedure mimics that for the continuous case. 
  We let
  \begin{equation*}
    u_h = \sum_{i=1}^N u_i \chi_i.
  \end{equation*}
Then, by definition, we have
\begin{equation*}
  |u_h|_0^2 = \sum_{i=1}^N u_i^2 h_i.
\end{equation*}
For each individual $i$, $u_i$ can be expressed in terms of the
differences,
\begin{align*}
  u_i &= \sum_{j=1}^i (u_j - u_{j-1})\\
  &= \sum_{j=1}^i \dfrac{u_j - u_{j-1}}{h_{j-\frac{1}{2}}} h_{j-\frac{1}{2}}.
\end{align*}
Here, the convention that $u_0=0$ has been used.
From this expression, and by the Cauchy-Schwarz inequality, we derive
that 
\begin{align*}
  u_i^2 &= \left(\sum_{j=1}^i \dfrac{u_j - u_{j-1}}{h_{j-\frac{1}{2}}}
    h_{j-\frac{1}{2}}\right)^2\\
  &\leq \sum_{j=1}^i \left(\dfrac{u_j -
      u_{j-1}}{h_{j-\frac{1}{2}}}\right)^2 
  h_{j-\frac{1}{2}}\cdot \sum_{j=1}^i h_{j-\frac{1}{2}} \\
  &\leq \sum_{j=1}^{N+1} \left(\dfrac{u_j -
      u_{j-1}}{h_{j-\frac{1}{2}}}\right)^2 
  h_{j-\frac{1}{2}}\cdot 1 \\
&= |\nabla_h u_h|_0^2.
\end{align*}
Thus, for the $L^2$-norm, we have
\begin{equation*}
  |u_h|_0^2 \leq |\nabla_h u_h|_0^2 \sum_{i=1}^N h_i = |\nabla_h u_h|_0^2.  
\end{equation*}
\end{proof}

Due to the discrete Poincar\'e inequality, the bilinear  form
$a_h(\cdot,\,\cdot)$ is coercive on $V_h$. Then the existence and
uniqueness of a solution to the numerical scheme \eqref{eq:17} follows
from the Lax-Milgram theorem. An energy bound on the solution is
given by the following
\begin{lemma}
  Let $f\in L^2(0,1)$, and let $u_h$ be the corresponding discrete
  solution to the scheme \eqref{eq:16}. Then
  \begin{equation}
    \label{eq:19}
    |u_h|_{1,h} \leq |f|_0.
  \end{equation}
\end{lemma}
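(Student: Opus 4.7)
The plan is the standard energy estimate: test the variational form of the scheme against the discrete solution itself, and then close the resulting inequality using the discrete Poincar\'e inequality established in the previous lemma.

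More concretely, I would start from the variational identity \eqref{eq:17} and substitute $v_h = u_h$, which immediately yields $|\nabla_h u_h|_0^2 = (f, u_h)$. Applying the Cauchy--Schwarz inequality on the right, followed by the discrete Poincar\'e inequality \eqref{eq:18}, bounds $(f, u_h)$ by $|f|_0 \, |u_h|_0 \leq C \, |f|_0 \, |\nabla_h u_h|_0$. Dividing through by $|\nabla_h u_h|_0$ (assuming it is nonzero, otherwise the conclusion is trivial) gives $|\nabla_h u_h|_0 \leq C|f|_0$.

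To match the stated inequality with constant $1$, I would observe from the proof of the Poincar\'e inequality that the constant $C$ can indeed be taken to be $1$ (the computation shows $|u_h|_0^2 \leq |\nabla_h u_h|_0^2 \cdot \sum_i h_i = |\nabla_h u_h|_0^2$ since the total length of the interval is $1$). Since the norm $|\cdot|_{1,h}$ on $V_h$ is defined to be $|\nabla_h \cdot|_0$, this finishes the proof. I do not expect a real obstacle here; the only subtlety is keeping track of constants to obtain the sharp bound $|u_h|_{1,h} \leq |f|_0$ with coefficient exactly $1$, which relies on the unit length of the domain $(0,1)$ rather than any new idea.
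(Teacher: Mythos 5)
Your proposal is correct and follows exactly the paper's argument: test \eqref{eq:17} with $v_h = u_h$, apply Cauchy--Schwarz and the discrete Poincar\'e inequality, and divide by $|u_h|_{1,h}$. Your observation that the Poincar\'e constant is exactly $1$ on the unit interval is the same (implicit) step the paper uses to obtain the bound with coefficient $1$.
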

\begin{proof}
Replacing $v_h$ by $u_h$ in \eqref{eq:17}, and using the discrete
Poincar\'e inequality, we have
  \begin{align*}
    |u_h|_{1,h}^2 &= (\nabla_h u_h,\, \nabla_h u_h) \\
    &= (f,\,u_h)\\
    &\leq |f|_0\cdot |u_h|_0 \\
    &\leq |f|_0\cdot |u_h|_{1,h}.
  \end{align*}
The claim \eqref{eq:19} follows by dividing both sides by
$|u_h|_{1,h}$. 
\end{proof}

With the existence, uniqueness, and boundedness of the discrete
solution, we can now show that the discrete solution converges, and
the limit is a unique solution of the boundary value problem
\eqref{eq:1}--\eqref{eq:2}. 
\begin{theorem}
  Let $f\in L^2(0,1)$, and let $u_h$ be the corresponding discrete
  solution of \eqref{eq:16} (or \eqref{eq:17}). Then there exists a
  unique $u\in V$ such that, as the mesh resolution refines, 
  \begin{equation}
    \label{eq:20}
    \P_h u_h \longrightarrow \Pi u\qquad\textrm{strongly in } F,
  \end{equation}
and $u$ solves the variational problem \eqref{eq:3}. 
\end{theorem}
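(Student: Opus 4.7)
The plan is to run a standard weak-compactness argument on $\{\P_h u_h\}$ in $F$, identify the limit via Lemma \ref{lem:convergences}(C2), pass to the limit in the variational form \eqref{eq:17}, and then upgrade weak convergence to strong convergence by means of a norm-convergence identity combined with the discrete Poincar\'e inequality. By the energy bound $|u_h|_{1,h}\leq|f|_0$ and the stability of $\P_h$ (Lemma \ref{lem:stability}), the family $\{\P_h u_h\}$ is bounded in $F$. Passing to a subsequence still denoted $\{\P_h u_h\}$, one obtains a weak limit $w\in F$, and Lemma \ref{lem:convergences}(C2) yields $u\in V$ with $w=\Pi u=(u,u_x)$. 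In particular $u_h\rightharpoonup u$ and $\nabla_h u_h\rightharpoonup u_x$ weakly in $L^2(0,1)$.

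To identify $u$ as a solution of \eqref{eq:3}, I would take $v\in\D(0,1)$, set $v_h=\R_h v\in V_h$, and test \eqref{eq:17} against $v_h$. By Lemma \ref{lem:convergences}(C1), $v_h\to v$ and $\nabla_h v_h\to v_x$ strongly in $L^2(0,1)$. Since $f_h$ is the cell-averaged projection of $f\in L^2$, $f_h\to f$ strongly in $L^2(0,1)$. Combining these convergences with the weak convergences above, the discrete identity $(\nabla_h u_h,\nabla_h v_h)=(f_h,v_h)$ passes in the limit to $(u_x,v_x)=(f,v)$ for every $v\in\D(0,1)$. Density of $\D(0,1)$ in $V$ extends this to the weak formulation \eqref{eq:3}, and the classical existence--uniqueness theory for \eqref{eq:3} pins down $u$ uniquely, so that the entire original sequence converges weakly to $\Pi u$, not just the extracted subsequence.

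For the strong convergence in \eqref{eq:20}, I would take $v_h=u_h$ in \eqref{eq:17}, giving the discrete energy identity $|\nabla_h u_h|_0^2=(f_h,u_h)$. Strong convergence of $f_h$ in $L^2$ paired with weak convergence of $u_h$ yields $(f_h,u_h)\to(f,u)=|u_x|_0^2$, the last equality being \eqref{eq:3} tested against $u$. Hence $|\nabla_h u_h|_0\to|u_x|_0$, and in the Hilbert space $L^2(0,1)$ convergence of norms plus weak convergence upgrade to strong convergence $\nabla_h u_h\to u_x$. The first component is handled by the discrete Poincar\'e inequality \eqref{eq:18} applied to $u_h-\R_h u\in V_h$: its discrete gradient $\nabla_h u_h-\nabla_h\R_h u$ tends to zero strongly in $L^2$ because both terms converge strongly to $u_x$, so $|u_h-\R_h u|_0\to 0$; combined with $\R_h u\to u$ strongly (Lemma \ref{lem:convergences}(C1)), this gives $u_h\to u$ strongly in $L^2(0,1)$. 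Altogether, $\P_h u_h\to\Pi u$ strongly in $F$. The main obstacle I anticipate is precisely this last step: on unstructured one-dimensional meshes no discrete Rellich compactness is available, so the upgrade must be obtained via the energy identity together with the discrete Poincar\'e inequality rather than by compactness.
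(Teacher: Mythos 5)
Your proposal is correct and follows essentially the same route as the paper: weak compactness plus the energy bound, identification of the limit via Lemma \ref{lem:convergences}(C2), passage to the limit in \eqref{eq:17} against $\R_h v$ for $v\in\D(0,1)$ using (C1), uniqueness to upgrade the subsequence to the whole family, and then the energy identity $(\nabla_h u_h,\nabla_h u_h)=(f,u_h)\to(f,u)=(u_x,u_x)$ to convert weak convergence of the gradients into strong convergence, with the discrete Poincar\'e inequality and (C1) handling the first component. The only cosmetic difference is that you invoke the norm-plus-weak-convergence criterion in $L^2$ where the paper expands $|u_h-\R_h u|_{1,h}^2$ directly and passes to the limit term by term; these are the same argument.
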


This result has been established in \cite{Faure:2006ky}. The
proof given below follows the same ideas of the reference cited, but is
adapted for the particular  mesh specifications used here. 

\begin{proof}
  By the uniform bound \eqref{eq:19} on $\{u_h\}_{h\in\mathcal{H}}$ and
    the weak compactness 
  of $F$, there exists a subsequence $u_{h'}$ such that
  \begin{equation}
    \label{eq:21}
    \P_h u_{h'} \equiv (u_{h'},\,\nabla_{h'}u_{h'}) \rightharpoonup
    (u,\,w) \quad \textrm{weakly in } F
  \end{equation}
By the $(C2)$ condition of Lemma \ref{lem:convergences}, $u$
belongs to $V$, and 
\begin{equation*}
  \dfrac{d u}{d x} = w.
\end{equation*}
 
Let $v\in\D(0,1)$ and $v_{h'} = \R_{h'} v$. Then by the $(C1)$
condition of Lemma
\ref{lem:convergences},
\begin{equation}
  \label{eq:22}
  \P_h\R_h v \equiv (R_h v,\,\nabla_h\R_h v) \longrightarrow \Pi v
  \equiv (v,\,v_x)\quad\textrm{ strongly in } F.
\end{equation}
Since each $u_{h'}$ is a solution to the scheme \eqref{eq:17}, and
$v_{h'}\in V_h$, we have
\begin{equation}
  \label{eq:23}
  (\nabla_{h'} u_{h'},\, \nabla_{h'}v_{h'}) = (f,\, v_{h'}).
\end{equation}
Thanks to the weak convergence of \eqref{eq:21} and the strong
convergence of \eqref{eq:22}, we can pass to the limit in
\eqref{eq:23} by sending $h'$ to zero and obtain
\begin{equation}
  \label{eq:24}
  (u_x,\, v_x) = (f,\,v),\qquad \forall v\in \D(0,1).
\end{equation}
Since $\D(0,1)$ is a dense subspace of $V$, the above relation holds
for every $v\in V$, and thus $u$ solves the variational equation
\eqref{eq:3}. 

The solution must be unique thanks to the coercivity of the bilinear
operator $a(\cdot,\,\cdot)$. 

Due to uniqueness, the whole sequence converges,
  \begin{equation}
\label{eq:25}
    \P_h u_{h} \equiv (u_{h},\,\nabla_{h}u_{h}) \rightharpoonup
    (u,\,u_x) \quad \textrm{weakly in } F
  \end{equation}

We now show that, actually, the convergence hold in the strong
topology of $F$. We consider
\begin{align*}
  |u_h - \R_h u|_{1,h}^2 &= a_h (u_h-\R_h u,\, u_h - \R_h u)\\
  &= (f, u_h) + (\nabla_h\R_h u,\, \nabla_h\R_h u) - (\nabla_h\R_h
  u,\,\nabla_h u_h) - (\nabla_h u_h,\,\nabla_h\R_h u)
\end{align*}
Thanks to the strong convergence in \eqref{eq:22} and the weak
convergence in \eqref{eq:25}, we can pass to the limit in the above,
and obtain
\begin{equation*}
  |u_h - \R_h u|_{1,h}^2 = (f,u) - (u_x, u_x) = 0.
\end{equation*}
The conclusion \eqref{eq:20} follows the observation that
\begin{align*}
  |\P_h u_h -\Pi u|_F &= |\P_h u_h - \P_h\R_h u + \P_h\R_h u - \Pi
  u|_H\\
&\leq |\P_h|\cdot |u_h - \R_h u|_{V_h} + |\P_h \R_h u - \Pi
  u|_F. 
\end{align*}
\end{proof}

\begin{remark}
  The current straightforward construction of $\R_h$ (see
  \eqref{eq:70}) may NOT be 
  extensible to high dimensional unstructured meshes, or to
  higher-order problems, e.g.~$\Delta^2$.
\end{remark}

\subsection{Error estimates}\label{sec:error-estimates}
We let $u$ be the true solution and $u_h$ be the solution of the
numerical scheme \eqref{eq:16} or \eqref{eq:17}. The
discrete $H^1$-norm of the difference (error) can be written as
\begin{align*}
  |u_h - \R_h u|_{1,h}^2 &= a_h(u_h-\R_h u,\, u_h - \R_h u)\\
  &= a_h(u_h,\, u_h - \R_h u) - a_h(\R_h u,\, u_h - \R_h u).
\end{align*}
Since $u_h$ is a solution to the discrete scheme, we can use the
relation \eqref{eq:17} to rewrite the above expression as
\begin{equation}
  \label{eq:34}
  |u_h - \R_h u|_{1,h}^2
= (f,\, u_h - \R_h u) - (\nabla_h \R_h u,\, \nabla_h(u_h - \R_h u)).
\end{equation}
By the integration by parts formula \eqref{eq:15} on the second inner
product, we obtain 
\begin{equation}
  \label{eq:26}
  |u_h - \R_h u|_{1,h}^2 = (f+\tilde\nabla_h\nabla_h \R_h u,\, u_h
  -\R_h u).
\end{equation}

On uniform meshes, we can use Taylor series expansion to show that the
truncation error $f+\tilde\nabla_h\nabla_h \R_h u$ converges to zero
as fast as $O(h^2)$, and thus $|u_h -\R_h u|_{1,h}$ converges to zeros
also at the second order. But this technique does not work for
non-uniform mesh, because the discrete Laplace operators are often not
consistent approximations to the corresponding differential
operators (\cite{Eymard:2000tt}). To see that, we expand $u(x_{i\pm
  1}$ around $u(x_i)$ by Taylor series,
\begin{align*}
  u(x_{i+1}) &= u(x_i) + h_{i+\frac{1}{2}} u_x(x_i) + \dfrac{1}{2}
  h^2_{i+\frac{1}{2}} u_{xx}(x_i) + O(h^3)\\
  u(x_{i-1}) &= u(x_i) - h_{i-\frac{1}{2}} u_x(x_i) + \dfrac{1}{2}
  h^2_{i-\frac{1}{2}} u_{xx}(x_i) + O(h^3)\\
\end{align*}
Denoting $u(x_i)$ by $u_i$, etc., we obtain from these expansions that
\begin{align*}
  \dfrac{u_{i+1} - u_i}{h_{i+\frac{1}{2}}}&= u_x(x_i) + \dfrac{1}{2}
  h_{i+\frac{1}{2}} u_{xx}(x_i) + O(h^2),\\
  \dfrac{u_{i} - u_{i-1}}{h_{i-\frac{1}{2}}}&= u_x(x_i) - \dfrac{1}{2}
  h_{i-\frac{1}{2}} u_{xx}(x_i) + O(h^2).
\end{align*}
Taking the difference between these two equations, and dividing both
sides by $h_i$, we derive
\begin{equation*}
\textcolor{rev}{[\tilde\nabla_h\nabla_h \R_h u]_i}
 \equiv \dfrac{  \dfrac{u_{i+1} - u_i}{h_{i+\frac{1}{2}}} - \dfrac{u_{i} -
    u_{i-1}}{h_{i-\frac{1}{2}}}}{h_i} = \dfrac{h_{i+\frac{1}{2}} +
  h_{i-\frac{1}{2}}}{2h_i} u_{xx}(x_i) + O(h).
\end{equation*}
The truncation error for the discrete Laplace operator can be written
as 
\begin{equation*}
  u_{xx}(x_i) - [\tilde\nabla_h\nabla_h \R_h u]_i =
  \left(1-\dfrac{h_{i+\frac{1}{2}} + h_{i-\frac{1}{2}}}{2h_i}\right)
  u_{xx}(x_i) + O(h).
\end{equation*}
Thus, on an non-uniform mesh, the discrete Laplace operator is a
consistent approximation  to the differential operator if and only if
\begin{equation*}
  \dfrac{h_{i+\frac{1}{2}} + h_{i-\frac{1}{2}}}{2h_i} = 1.
\end{equation*}
This condition is generally not true on non-uniform meshes. 

 Instead of Taylor series expansion, we integrate \eqref{eq:1} over
 $K_i$ to obtain 
\begin{equation}
  \label{eq:27}
-u_x(x_{i+\frac{1}{2}}) + u_x(x_{i-\frac{1}{2}}) = (f,\,\chi_i).  
\end{equation}
 We define the restriction operator on the dual mesh as
 \begin{equation}
   \label{eq:28}
   \tilde \R_h u_x  = \sum_{i=0}^N u_x(x_{i+\frac{1}{2}}) \chi_{i+\frac{1}{2}}.
 \end{equation}
We rewrite \eqref{eq:27} as
\begin{equation*}
  -\dfrac{u_x(x_{i+\frac{1}{2}}) - u_x(x_{i-\frac{1}{2}})}{h_i} \cdot
  h_i = (f,\,\chi_i).
\end{equation*}
Using the newly defined restriction operator $\tilde\R_h$, and the
gradient operator $\tilde\nabla_h$ on the dual mesh (see
\eqref{eq:6}), we can rewrite the above 
relation as
\begin{equation*}
  -(\tilde\nabla_h \tilde\R_h u_x,\,\chi_i) = (f,\,\chi_i).
\end{equation*}
By integration by parts, it becomes
\begin{equation}
  \label{eq:29}
  (\tilde\R_h u_x,\,\nabla_h \chi_i) = (f,\,\chi_i).
\end{equation}
We note that this relation hold for arbitrary $1\le i\le N$. 
Thus, for an arbitrary $v_h\in V_h$, the following relation holds,
\begin{equation}
  \label{eq:30}
  (\tilde\R_h u_x,\, \nabla_h v_h ) = (f,\, v_h).
\end{equation}

For arbitrary $x_{i+\frac{1}{2}}$, we note by Taylor series expansion
that, if $u\in C^2([0,1])$, 
\begin{equation}
  \label{eq:31}
  u'(x_{i+\frac{1}{2}}) = \dfrac{u(x_{i+1}) -
    u(x_i)}{h_{i+\frac{1}{2}}} + \tau_{i+\frac{1}{2}},
\end{equation}
where $\tau_{i+\frac{1}{2}}$ represents the truncation error and 
\begin{equation*}
  |\tau_{i+\frac{1}{2}}| \leq C\cdot h.
\end{equation*}
We note that $u'(x_{i+\frac{1}{2}})$ represents the flux at
$x_{i+\frac{1}{2}}$, which is the interface between cells $K_i$ and
$K_{i+\frac{1}{2}}$ (Figure \ref{fig:1d-mesh}). Relation \eqref{eq:31}
indicates the consistency 
in the approximation to the flux. Using the restriction operator
$\R_h$ and the discrete gradient operator $\nabla_h$, we can rewrite
\eqref{eq:31} as
\begin{equation}
  \label{eq:32}
  \tilde\R_h u_x = \nabla_h \R_h u + \tau_h,
\end{equation}
with
\begin{equation*}
  \tau_h = \sum_{i=0}^N \tau_{i+\frac{1}{2}}
  \chi_{i+\frac{1}{2}},\qquad |\tau_h|_\infty \leq C\cdot h.
\end{equation*}
Thus \eqref{eq:30} can be written as
\begin{equation}
  \label{eq:33}
  (\nabla_h\R_h u,\,\nabla_h v_h) = (f,\,v_h) -
  (\tau_h,\,v_h),\qquad\forall\, v_h \in V_h.
\end{equation}
Using \eqref{eq:33} in the relation \eqref{eq:34}, and by the
Poincar\'e inequality, we obtain
\begin{align*}
  |u_h - \R_h u|_{1,h}^2 &= (\tau_h,\, u_h - \R_h u)  \\
  &\leq |\tau_h|_0\cdot |u_h - r_h u|_0\\
  &\leq C|\tau_h|_0\cdot |u_h - \R_h u|_{1,h}.
\end{align*}
The estimate on the remainder term $\tau_h$ implies that
\begin{equation}
  \label{eq:35}
  |u_h - \R_h u|_{1,h} \leq C\cdot h.
\end{equation}
The $L^2$ error estimate can be trivially obtained through the
Poincar\'e inequality,
\begin{equation}
  \label{eq:36}
  |u_h - \R u|_{0,h} \leq C|u_h - \R_h u|_{1,h} \leq Ch.
\end{equation}

If $x_{i+\frac{1}{2}}$ is the midpoint of $K_{i+\frac{1}{2}}$, and
$u\in \mathcal{C}^3([0,1])$, then the
remainder $\tau_h$ is second-order in $h$, that is,
\begin{equation}
  \label{eq:37}
|\tau_h|_\infty \leq C h^2.
\end{equation}
Then the semi $H^1$ and $L^2$ error estimates are of the second order
as well,
\begin{align}
  |u_h - \R_h u|_{1,h} &\leq C\cdot h^2,\\   
 |u_h - \R_h u|_{0,h} &\leq C\cdot h^2.
\end{align}

These results are summarized as follows.
\begin{theorem}
  Let $u$ be the true solution of the elliptic boundary value problem
  \eqref{eq:1}-~\eqref{eq:2}, and $u_h$ be the discrete solution of
  the numerical scheme \eqref{eq:17}. If $u\in\C^2([0,1])$, then the
  following error estimates hold,
  \begin{align*}
    |u_h - \R_h u|_{0,h} &\leq C\cdot h,\\
    |u_h - \R_h u|_{1,h} &\leq C\cdot h.\\
  \end{align*}
If, furthermore, $x_{i+\frac{1}{2}}$ is the midpoint of
$K_{i+\frac{1}{2}}$ for each $1\leq i\leq N-1$, and $u\in\C^3([0,1])$,
  then the following error estimates hold,
  \begin{align*}
    |u_h - \R_h u|_{0,h} &\leq C\cdot h^2,\\
    |u_h - \R_h u|_{1,h} &\leq C\cdot h^2.\\
  \end{align*}
\end{theorem}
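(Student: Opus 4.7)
The plan is to control the discrete $H^1$-norm of the error $u_h - \R_h u$ through the standard energy identity, and then close the loop by invoking the consistency of the \emph{flux} approximation rather than of the discrete Laplacian. The starting point is the expression
\[
|u_h - \R_h u|_{1,h}^2 = a_h(u_h - \R_h u,\, u_h - \R_h u),
\]
which by the scheme \eqref{eq:17} and the discrete integration by parts \eqref{eq:15} rewrites as $(f + \tilde\nabla_h\nabla_h \R_h u,\, u_h - \R_h u)$. Since the discrete Laplacian is \emph{not} consistent on non-uniform meshes, I would not try to estimate this residual via Taylor expansion of $\tilde\nabla_h\nabla_h \R_h u$ directly.

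Instead, the crucial step is to start from an exact identity obtained by integrating \eqref{eq:1} over each control volume $K_i$, which gives $-u_x(x_{i+\frac12}) + u_x(x_{i-\frac12}) = (f,\chi_i)$. Introducing the dual-mesh restriction $\tilde\R_h u_x$ and applying discrete integration by parts yields the flux-based identity $(\tilde\R_h u_x,\, \nabla_h v_h) = (f, v_h)$ for all $v_h \in V_h$. At this level, only the flux at each interface $x_{i+\frac12}$ matters, and Taylor expansion around $x_{i+\frac12}$ of the difference quotient on the endpoints of $K_{i+\frac12}$ gives
\[
\tilde\R_h u_x = \nabla_h \R_h u + \tau_h, \qquad |\tau_h|_\infty \leq C h,
\]
for $u \in \C^2([0,1])$. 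When $x_{i+\frac12}$ is the midpoint of $K_{i+\frac12}$ and $u\in\C^3([0,1])$, the leading order term in this expansion is antisymmetric and cancels, improving the bound to $|\tau_h|_\infty \leq C h^2$.

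Substituting this flux consistency relation into the identity above produces the modified variational equation $(\nabla_h\R_h u,\,\nabla_h v_h) = (f, v_h) - (\tau_h, v_h)$. Subtracting this from the scheme \eqref{eq:17} and choosing $v_h = u_h - \R_h u$ gives
\[
|u_h - \R_h u|_{1,h}^2 = (\tau_h,\, u_h - \R_h u) \leq |\tau_h|_0 \cdot |u_h - \R_h u|_{0,h},
\]
and the discrete Poincar\'e inequality \eqref{eq:18} bounds $|u_h - \R_h u|_{0,h}$ by $|u_h - \R_h u|_{1,h}$. Dividing through delivers $|u_h - \R_h u|_{1,h} \leq C |\tau_h|_0$, and the two estimates on $\tau_h$ give the $O(h)$ and (conditional) $O(h^2)$ bounds claimed. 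The $L^2$ bounds then follow by one more application of Poincar\'e.

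The main conceptual obstacle is the very first move: bypassing the inconsistency of the discrete Laplacian by rewriting the exact equation as an identity in terms of fluxes, where consistency is restored. Once this reformulation is in place, the rest of the argument is largely mechanical. The only remaining delicate point is the second-order case, where one must expand $u$ around the interface midpoint $x_{i+\frac12}$ (rather than around the cell centers $x_i$) to see the leading error term vanish under the midpoint hypothesis.
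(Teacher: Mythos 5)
Your proposal is correct and follows essentially the same route as the paper: the same energy identity, the same pivot from the (inconsistent) discrete Laplacian to the exact flux identity obtained by integrating the equation over each control volume, the same flux-consistency expansion $\tilde\R_h u_x = \nabla_h\R_h u + \tau_h$ with the midpoint cancellation for the second-order case, and the same Cauchy--Schwarz/Poincar\'e closing argument. No substantive differences to report.
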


\section{The two-dimensional incompressible Stokes
  problem}\label{sec:two-dimens-incompr}
\subsection{Statement of the problem}\label{sec:statement-problem}
We let $\Omega\subset\mathbb{R}^2$ be a bounded domain with smooth
boundaries. The incompressible Stokes problem on this domain can be
stated as follows,
\begin{align}
  -\Delta\ub + \nabla p &= \bs{f}, & &\Omega\label{eq:38}\\
  \nabla\cdot\ub &= 0, &  &\Omega\label{eq:39}\\
  \ub &= 0, & &\p\Omega\label{eq:40}
\end{align}
Here $\ub = \ub(x,y)$ is a two-dimensional vector field on $\Omega$
representing the velocity, $p = p(x,y)$ a scalar field representing
the pressure, and $\bs{f} = \bs{f}(x,y)$ a two-dimensional vector
field representing the external force. The boundary condition
\eqref{eq:40} is of the Dirichlet type. Using the vector identity
\begin{equation}
  \Delta\ub = \nabla(\nabla\cdot\ub) +
  \nabla^\perp(\nabla\times\ub) \label{eq:41} 
\end{equation}
and the incompressibility condition \eqref{eq:39}, we obtain the
vorticity formulation for equation \eqref{eq:38},
\begin{equation}
  \label{eq:42}
  -\nabla^\perp (\nabla\times\ub) + \nabla p = \bs{f},\qquad \Omega.
\end{equation}
In the above, $\nabla^\perp = \kb\times\nabla$ denotes the skewed
gradient operator. Equations \eqref{eq:42}, \eqref{eq:39} and
\eqref{eq:40} form the vorticity formulation of the two-dimensional
incompressible Stokes problem. In this study, we use the vorticity
formulation, as it highlights the role of vorticity, and seems most
suitable for staggered-grid discretization techniques. 

The natural functional space for the solution of the Stokes problem is 
\begin{equation*}
  V = \left\{\ub\in H^1_0(\Omega)\,|\, \nabla\cdot \ub = 0 \textrm{ in
    } \Omega\right\}.
\end{equation*}
It is a Hilbert space under the norm
\begin{equation*}
  \| \ub \|_V = |\nabla\times\ub|_0.
\end{equation*}
By integration by parts, we obtain the weak formulation of the Stokes
problem \eqref{eq:42}, \eqref{eq:39}-~\eqref{eq:40}:
\begin{quote}
  {\itshape For each $\bs{f}\in L^2(\Omega)$, find $\ub\in V$ such that}
  \begin{equation}
    \label{eq:43}
    (\nabla\times\ub,\,\nabla\times\vb) =
    (\bs{f},\,\vb),\qquad\forall\, \vb\in V.
  \end{equation}
\end{quote}
The existence and uniqueness of a solution to this problem follows 
from the Lax-Milgram theorem. 

\subsection{Mesh specifications and an external approximation of
  $V$}\label{sec:mesh} 
\begin{figure}[h]
  \centering
  \includegraphics[width=4.5in]{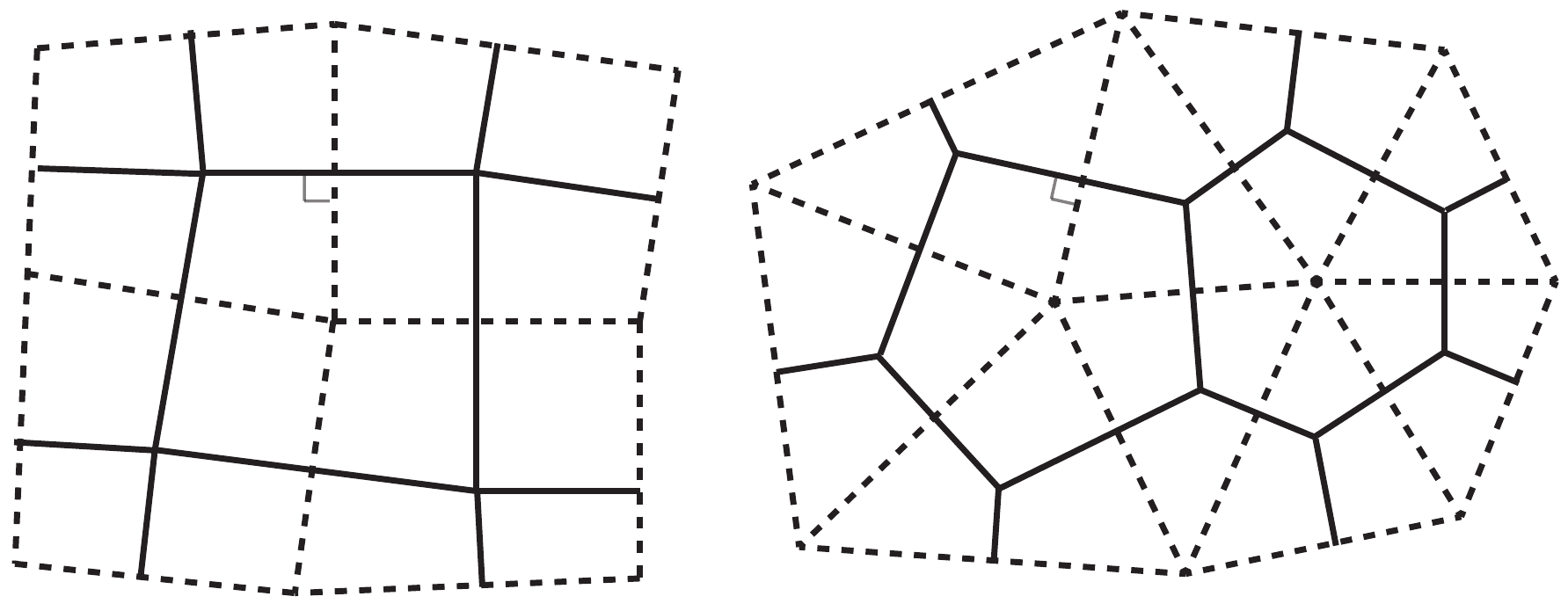}
  \caption{Generic dual meshes, with the domain boundaries passing
    through the primary cell centers. Left: a generic quadrilateral
    dual mesh. Right: a generic Delaunay-Voronoi dual mesh.}
  \label{fig:quad-dv}
\end{figure}
Our approximation of the function space is based on discrete meshes
that consist of polygons. To avoid potential technical issues with the boundary, we
shall assume that the domain $\Omega$ itself is polygonal. We make use
of a pair of staggered meshes, with one called primary and the other called
dual. The meshes consist of polygons, called cells, of arbitrary
shape, but conforming to the requirements to be specified. The centers
of the cells on the primary mesh are the vertices of the cells on the
dual mesh, and vice versa. The edges of the primary cells intersect
{\it orthogonally} with the edges of the dual cells. The line segments
of the boundary $\partial\Omega$ pass through the centers of the
primary cells that border the boundary. Thus the primary cells on the
boundary are only partially contained in the domain. 
\textcolor{rev}{Two examples of this mesh type are shown in Figure
  \ref{fig:quad-dv}.}

% Shown in Figure \ref{fig:quad-dv} are two common types of staggered
% grids: a quadrilateral-quadrilateral staggered grid (left), and a
% Delaunay-Voronoi tessellation (right). 
% Simple diagrams
% of such dual meshes are shown in Figures \ref{fig:quad-dv}. 
% {\bf Introduce quads and Delauny-Voronoi}. 

\begin{figure}[h]
  \centering
  \scalebox{0.65}{\includegraphics{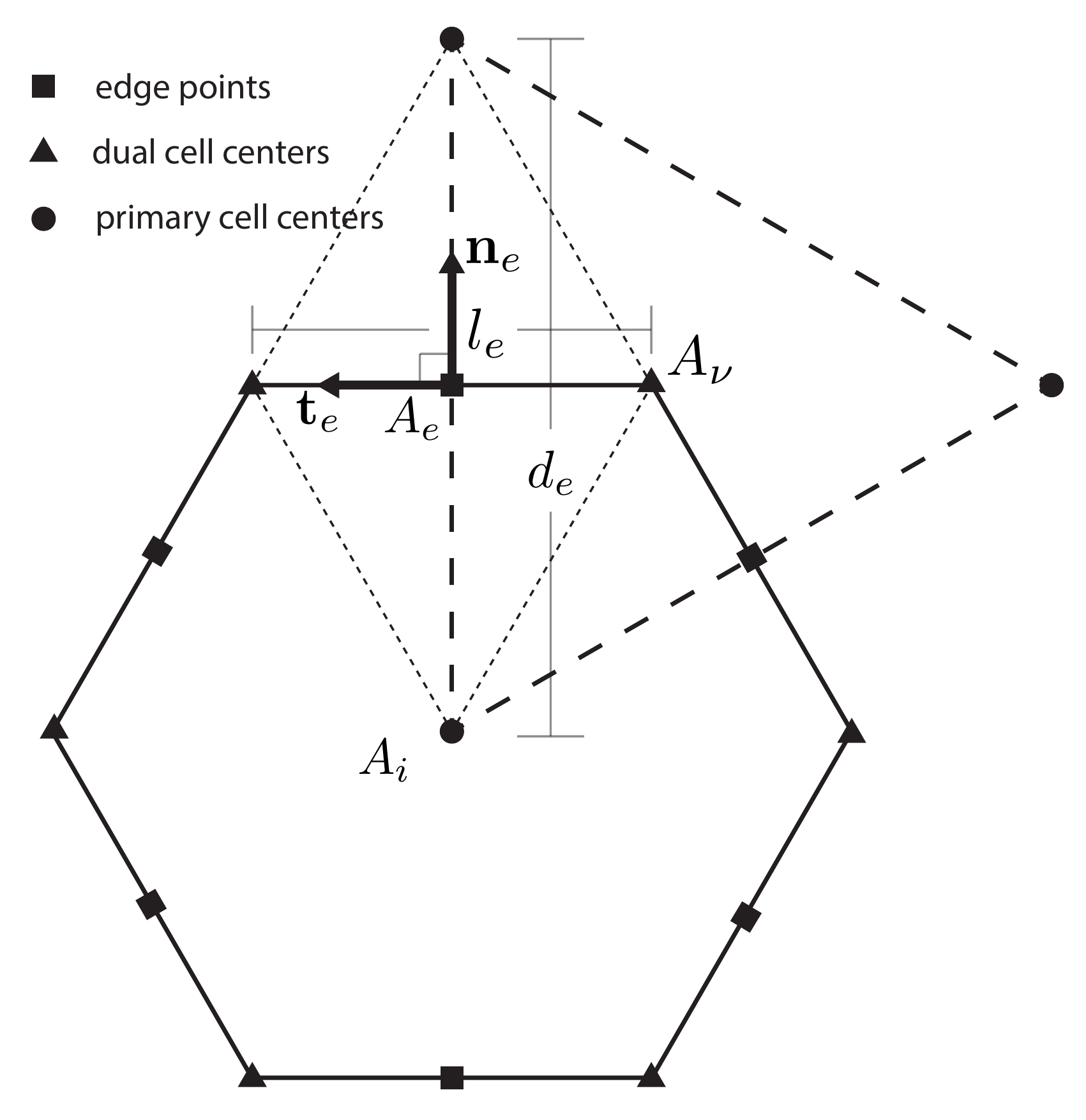}}
  \caption{Notations}
  \label{fig:notations}
\end{figure}

\begin{table}[h]
\centering
\ra{1.3}
\caption{Sets of elements defining the connectivity of a unstructured
  dual grid.}
\vspace{4mm}
\begin{tabular}{@{}ll@{}}\toprule
Set & Definition\\
\midrule
EC($i$) & Set of edges defining the boundary of primary cell $A_i$\\
VC($i$) & Set of dual cells that form the vertices primary cell $A_i$\\
CE($e$) & Set of primary cells boarding edge $e$\\
VE($e$) & Set of dual cells boarding edge $e$\\
CV($\nu$) & Set of primary cells that form vertices of dual cell $D_\nu$\\
EV($\nu$) & Set of edges that define the boundary of dual cell $D_\nu$\\
\bottomrule
\end{tabular}
\label{ta1}
\end{table}

In order to construct function spaces on this type of meshes, some
notations are in order, for which we follow the conventions made in
\cite{Ringler:2010io,Chen:2013bl}. As shown in the diagram in Figure
\ref{fig:notations}, the primary cells are denoted as $A_i,\, 1\leq
i\leq N_c + N_{cb}$, where $N_c$ denotes the number of cells that are
in the interior of the domain, and $N_{cb}$ the number of cells that
are on the boundary. We assume the cells are numbered so that $A_i$
with $1\leq i\leq N_c$ refer to interior cells.  The dual cells, which
all lie inside the domain, are denoted as $A_\nu,\,1\leq \nu\leq
N_v$. 
The area of $A_i$ (resp.~$A_\nu$) is denoted as $|A_i|$
(resp.~$|A_\nu|$). 
% When no 
% confusion should arise, we also use $A_i$ and $A_\nu$ to denote the
% areas of the primary cells and dual cells, respectively.
Each primary
cell edge corresponds to a distinct dual cell edge, and vice
versa. Thus the primary and dual cell edges share a common index $e,\,
1\leq e\leq N_e+N_{eb}$, where $N_e$ denotes the number of edge pairs
that lie entirely in the interior of the domain, and $N_{eb}$ the
number of edge pairs on the boundary, i.e., with dual cell edge
aligned with the boundary of the domain. Again, we assume that $1\le
e\le N_e$ refer to interior edges.
 Upon an edge pair $e$, the distance between the two
primary cell centers, which is also the length of the corresponding
dual cell edge, is denoted as $d_e$, while the distance between the
two dual cell centers, which is also the length of the corresponding
primary cell edge, is denoted as $l_e$. 
These two edges form the diagonals of a diamond-shaped region, whose
vertices consist of the two neighboring primary cell centers and the
two neighboring dual centers. The diamond-shaped region is also
indexed by $e$, and will be referred to as $A_e$.
The Euler formula for plannar
graphs states that the number of primary cell centers $N_c + N_{cb}$, the
number of vertices (dual cell centers) $N_v$, and the number of
primary or dual cell edges $N_e + N_{eb}$ must satisfy the relation
\begin{equation}
\label{eq:44}
  N_c + N_{cb} + N_v = N_e + N_{eb} + 1.
\end{equation}
The connectivity information of the unstructured staggered meshes is
provided by six {\it sets of elements} defined in Table \ref{ta1}. 

For each edge pair, a unit vector $\nb_e$, normal to the primary cell
edge, is specified. A second unit vector $\tb_e$ is defined as
\begin{equation}
\label{eq:45}
  \tb_e = \kb\times\nb_e,
\end{equation}
with $\kb$ standing for the upward unit vector. Thus $\tb_e$ is
orthogonal to the dual cell edge, but tangent to the primary cell
edge, and points to the vertex on the left side of $\nb_e$. For each
edge $e$ and for each $i\in \CE(e)$ (the set of cells on edge $e$, see
Table \ref{ta1}), we define the direction indicator
\begin{equation}
\label{eq:46}
  n_{e,i} = \left\{
  \begin{aligned}
    1& & &\phantom{sssssss}\textrm{if }\nb_e\textrm{ points away from primary cell
    }A_i,\\
    -1& &  &\phantom{sssssss}\textrm{if }\nb_e\textrm{ points towards primary cell
    }A_i,\\
  \end{aligned}\right.
\end{equation}
and for each $\nu\in \VE(e)$,
\begin{equation}
\label{eq:47}
  t_{e,\nu} = \left\{
  \begin{aligned}
    1& & &\phantom{sssssss}\textrm{if }\tb_e\textrm{ points away from dual cell
    }A_\nu,\\
    -1& &  &\phantom{sssssss}\textrm{if }\tb_e\textrm{ points towards dual cell
    }A_\nu.\\
  \end{aligned}\right.
\end{equation}

For this study, we make the following regularity assumptions on the
meshes. We assume that the diamond-shaped region $A_e$ is actually
convex. In other words, the intersection point of each edge pair falls
inside each of the two edges. We also assume that the meshes are
quasi-uniform, in the sense that there exists $h>0$ such that, for 
each edge $e$,
\begin{equation}
\label{eq:48}
  mh\leq l_e,\,d_e \leq Mh
\end{equation}
for some fixed constants $(m,\,M)$ that are independent of the
meshes. 
The staggered dual meshes are thus designated by $\mathcal{T}_h$.
\textcolor{rev}{For the convergence analysis, it is assumed in
  \cite{Chen:2016uw} that, for each edge pair $e$, the primary cell
  edge nearly bisect 
  the dual cell edge, and miss by at most $O(h^2)$. This assumption is
  also made here for the error analysis. Generating meshes
  conforming to this requirement on irregular domains, i.e.~domains
  with non-smooth boundaries or domains on surfaces, can be a
  challenge, and will be addressed elsewhere. But we point out that, on
  regular domains with smooth boundaries, this type of meshes can be
  generated with little extra effort in addition to the use of
  standard mesh generators, such as the 
  centroidal Voronoi tessellation algorithm (\cite{Du2003-gn, 
  Du2002-lf, Du1999-th})}.  

The approximation to the function space $V$ is given by
\begin{equation}
\label{eq:70a}
V_h = \left\{ u_h = \sum_{e = 1}^{N_e} u_e \chi_e \nb_e \,\biggr\vert\,\,
  \nabla_h\cdot u_h = 0.\right\} 
\end{equation}
Thanks to the discrete Poincar\'e inequality for vector fields (see
\cite{Chen:2016uw}, $V_h$ is a Hilbert space under the norm
\begin{equation}
\label{eq:71}
  \|u_h\|_{V_h} \equiv |u_h|_{1,h}\equiv |\tilde\nabla_h\times u_h|_0.
\end{equation}

Since all functions in $V$ are divergence free, there is a one-to-one
correspondence between $V$ and $H^2_0(\Omega)$
(\cite{Girault:1986vn}).  
% we only need a scalar
% function space to record the curl of those functions. 
We let 
$$F =L^2(\Omega)\times L^2(\Omega).$$
For each $\ub\in V$, we let $\psi\in H^2_0(\Omega)$ and $\omega\in
L^2(\Omega)$ such that 
\begin{equation}
\label{eq:72}
\ub = \nabla^\perp \psi,\qquad \nabla\times\ub = \omega.
\end{equation}
Then we redefine the isomorphism $\Pi$ from $V$ into $F$ as
\begin{equation}
\label{eq:73}
  \Pi\ub = (\psi,\,\omega)\in F,\qquad\forall \ub\in V.
\end{equation}
The space $F$ is endowed with the usual $L^2$-norm. In view of the
norm for $V$, it is clear that $\Pi$ is an isomorphism from $V$ into
$F$. It is important to note that the image of the operator $\Pi$ is
not the whole space $F$. It is a nowhere dense, closed subspace of the
latter.

Under the foregoing assumptions on the meshes, we now define the
restriction operator $\R_h$. We only need to define 
$\R_h$ for a dense subset of functions of $V$, and the definition can
then be extended to the whole space of $V$ according to a result in
\cite{Temam:1980wr}. We let 
\begin{equation}
\label{eq:74}
\mathcal{V} = \{ \ub \in \mathcal{D}(\Omega)\,|\, \nabla\cdot\ub = 0\},
\end{equation}
which is a dense subspace of $V$. For an arbitrary $\ub
\in\mathcal{V}$, there exists $\psi\in\D(\Omega)$ such that $\ub =
\nabla^\perp\psi$, and thus $\omega = \Delta\psi$. We then
define the 
associated discrete scalar field as
\begin{equation}
\label{eq:75}
\psi_h =  \sum_{\nu=1}^{N_v}\psi_\nu \chi_\nu,
\end{equation}
with
\begin{align}
  \psi_\nu &= \psi(x_\nu), & &\textrm{on interior dual cells,}\label{eq:76}\\
  \psi_\nu &= 0, & &\textrm{on dual cells on the  
    boundary,}  \label{eq:77}
\end{align} 
where $x_\nu$ is the coordinates for the center of dual cell
$\nu$. We note that the function $\psi$ has compact support on $\Omega$, and
therefore, if the grid resolution $h$ is fine enough, the
specification \eqref{eq:77} is consistent with \eqref{eq:76}. 
Finally, the
restriction operator $\R_h$ on $\ub\in\mathcal{V}$ is defined as
\begin{equation}
\label{eq:78}
  \R_h\ub = \tilde\nabla_h^\perp\psi_h.
\end{equation}
$\R_h\ub$ is divergence free in the discrete sense by construction
(see Lemma 2.6 of \cite{Chen:2016uw}). It vanishes on the boundary thanks
to the condition \eqref{eq:77} and the definition for
the skewed gradient operator on the boundary.

To define the prolongation operator $\P_h$, we note that, by the virtue
of Lemma 2.6 of \cite{Chen:2016uw}, every $u_h\in V_h$ is represented by
a scalar field $\psi\in\Psi_h$ via
\begin{equation}
\label{eq:79}
  u_h = \tilde\nabla_h^\perp \psi_h.
\end{equation}
The prolongation operator $\P_h$ is defined as
\begin{equation}
\label{eq:80}
  \P_h u_h = (\psi_h, \,\tilde\nabla_h\times u_h),\qquad\forall u_h \in V_h.
\end{equation}

The external approximation of $V$ consists of the mapping pair
$(F,\,\Pi)$ and the family of triplets
$\{V_h,\,\R_h,\,\P_h\}_{h\in\mathcal{H}}$. Concerning this approximation
we have the following claim.

\begin{theorem}\label{thm:stable-conv-h1_0}
  The external approximation that consists of the function space $F$,
  the isomorphic mapping $\Pi$, and the family of triplets $\{V_h,\,
  \R_h,\, \P_h\}_{h\in\mathcal{H}}$ is a stable and convergent
  approximation of $V$.
\end{theorem}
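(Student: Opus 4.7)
The plan is to verify the three ingredients of a stable and convergent external approximation, in direct analogy with Lemmas \ref{lem:stability} and \ref{lem:convergences} of the one-dimensional case: (i) uniform boundedness of the prolongation operator $\P_h : V_h\to F$, (ii) the consistency $\P_h\R_h\ub\to\Pi\ub$ strongly in $F$ for every $\ub\in V$, and (iii) the identification of every weak limit of $\{\P_h u_h\}$ in $F$ as $\Pi\ub$ for some $\ub\in V$.

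For (i), I would exploit the representation $u_h=\tilde\nabla_h^\perp\psi_h$, which yields $|u_h|_0=|\tilde\nabla_h\psi_h|_0$. Since $\psi_h$ vanishes at boundary dual cells by \eqref{eq:77}, the discrete scalar Poincar\'e inequality on the dual mesh gives $|\psi_h|_0\leq C|\tilde\nabla_h\psi_h|_0=C|u_h|_0$, and the discrete vector Poincar\'e inequality established in \cite{Chen:2016uw} gives $|u_h|_0\leq C|\tilde\nabla_h\times u_h|_0=C\|u_h\|_{V_h}$. Chaining these estimates yields $\|\P_h u_h\|_F\leq C\|u_h\|_{V_h}$ uniformly in $h\in\H$.

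For (ii), by the stability in (i) and the density of $\V$ in $V$, it suffices to prove strong convergence in $F$ for $\ub\in\V$. For such $\ub=\nabla^\perp\psi$ with $\psi\in\D(\Omega)$, $\psi_h$ is the pointwise sampling of a smooth compactly supported function on dual cell centers, so $\psi_h\to\psi$ strongly in $L^2(\Omega)$ by Taylor expansion and dominated convergence. For the second component, $\tilde\nabla_h\times\R_h\ub=\tilde\nabla_h\times\tilde\nabla_h^\perp\psi_h$ is a discrete Laplace operator applied to $\psi_h$; using the standing nearly-bisecting mesh assumption and Taylor expansion of the smooth $\psi$ on primary cells, this discrete Laplacian converges strongly to $\Delta\psi=\omega$ in $L^2(\Omega)$, completing (ii).

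Condition (iii) is the main obstacle. Suppose $\P_{h'}u_{h'}=(\psi_{h'},\tilde\nabla_{h'}\times u_{h'})\rightharpoonup(\psi,w)$ weakly in $F$; I must exhibit $\ub\in V$ with $\Pi\ub=(\psi,w)$, which reduces to showing $\psi\in H^2_0(\Omega)$ and $w=\Delta\psi$, then setting $\ub:=\nabla^\perp\psi$. For $\varphi\in\D(\Omega)$, I would use the two-dimensional discrete integration-by-parts identities from \cite{Chen:2016uw} on the staggered mesh to rewrite the pairing of $\tilde\nabla_{h'}\times u_{h'}$ with a discrete restriction of $\varphi$ as a pairing of $\psi_{h'}$ with a discrete Laplacian of that restriction. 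Passing to the limit, using the weak convergence of $\psi_{h'}$ and $\tilde\nabla_{h'}\times u_{h'}$ together with the strong convergence of the discrete test functions and their discrete Laplacians to $\varphi$ and $\Delta\varphi$ (the two-dimensional analogue of \eqref{eq:12}), produces $(w,\varphi)=(\psi,\Delta\varphi)$ for every $\varphi\in\D(\Omega)$, hence $\Delta\psi=w$ in $\D'(\Omega)$. The subtle step, paralleling the last part of the proof of Lemma \ref{lem:convergences}, is to upgrade $\psi$ to $H^2_0(\Omega)$: extending $\psi$ and $w$ by zero outside $\Omega$ consistent with \eqref{eq:77}, one shows $\Delta\psi^\#=w^\#\in L^2(\mathbb{R}^2)$ with support in $\overline\Omega$, from which elliptic regularity gives $\psi^\#\in H^2(\mathbb{R}^2)$ and the trace theorem yields $\psi\in H^2_0(\Omega)$. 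Then $\ub=\nabla^\perp\psi\in V$ realizes $\Pi\ub=(\psi,w)$ as required.
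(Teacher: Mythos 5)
Your three-part architecture (stability of $\P_h$, consistency $\P_h\R_h\ub\to\Pi\ub$, identification of weak limits) is the right one, and it is essentially the strategy of the source where the actual proof lives: note that the paper itself does not prove Theorem~\ref{thm:stable-conv-h1_0} but defers entirely to \cite{Chen:2016uw}, so the comparison here is against that reference and against the one-dimensional template of Lemmas~\ref{lem:stability} and~\ref{lem:convergences}. Your part (i) (chaining the scalar discrete Poincar\'e inequality for $\psi_h$, which vanishes on boundary dual cells by \eqref{eq:77}, with the vector discrete Poincar\'e inequality) and your part (iii) (double discrete summation by parts against restricted test functions, passage to the limit to get $\Delta\psi=w$ in $\D'(\Omega)$, then extension by zero plus elliptic regularity to land in $H^2_0(\Omega)$) are faithful two-dimensional analogues of the paper's 1D arguments and are sound at the level of a sketch.

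The one step that does not go through as written is the density reduction in (ii). You claim that ``by the stability in (i) and the density of $\V$ in $V$, it suffices to prove strong convergence in $F$ for $\ub\in\V$.'' Stability of $\P_h$ plus density is not enough: to pass from $\V$ to $V$ you would also need $\|\R_h(\ub-\vb)\|_{V_h}\leq C\|\ub-\vb\|_V$ uniformly in $h$, and $\R_h$ is pointwise sampling of $\psi$ followed by discrete second differences, which is not obviously uniformly bounded from $H^2_0(\Omega)$ into $V_h$. This is precisely why the paper invokes the abstract extension result of \cite{Temam:1980wr}: in a stable external approximation one may \emph{define} $\R_h$ only on a dense subspace, verify (C1) there, and extend the operator to all of $V$ by an abstract diagonal argument rather than by estimating the sampling operator itself. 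Your instinct is right but the mechanism is different, and without it the reduction is a genuine gap. A second, smaller point: the strong $L^2$ convergence of $\tilde\nabla_h\times\tilde\nabla_h^\perp\psi_h$ to $\Delta\psi$ is exactly where the near-bisection hypothesis carries all the weight, since the paper stresses that discrete Laplacians are generically \emph{inconsistent} on unstructured meshes; you do cite the hypothesis, but this is the step that deserves the detailed Taylor-expansion verification rather than a one-line appeal.
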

The proof can be found in \cite{Chen:2016uw}. 

\subsection{The MAC scheme}\label{sec:mac-scheme}
In discretizing the system, it is important to ensure that
the external forcing $\fb$ is also discretized in a consistent way.
For the sake of simplicity in the convergence proof later on, we discretize the
forcing term using its scalar stream and potential functions. For each
$\fb\in L^2(\Omega)\times L^2(\Omega)$, we let $\psi^f\in H_0^1(\Omega)$
and $\phi^f\in H^1(\Omega)/\mathbb{R}$ be such that 
\begin{equation}
\label{eq:49}
  \fb = \nabla^\perp \psi^f + \nabla \phi^f.
\end{equation}
By the famous Helmholtz decomposition theorem, the stream and
potential functions always exist and are unique, for each vector field
$\fb$ in $L^2(\Omega)\times L^2(\Omega)$ (see
\cite{Girault:1986vn}). The stream and potential functions are
discretized on the dual and primary meshes, respectively, by
averaging,
\begin{align}
  \psi^f_h &= \sum_{\nu=1}^{N_v} \psi^f_\nu\chi_\nu,& \textrm{with }
  \psi^f_\nu &= \overline{\psi^f}^{A_\nu},\label{eq:50}\\
  \phi^f_h &= \sum_{i=1}^{N_c+N_{cb}} \phi^f_i \chi_i,& \textrm{with }
  \phi^f_i &= \overline{\phi^f}^{A_i}.\label{eq:51}
\end{align}
Employing the technique of approximation by smooth functions and 
the Taylor's series expansion, we can show that the discrete scalar
fields converge to the corresponding continuous fields in the
$L^2$-norm, i.e.
\begin{align}
  \psi^f_h&\longrightarrow \psi^f & &\textrm{strongly in
  }L^2(\Omega),\label{eq:52}\\
\phi^f_h & \longrightarrow \phi^f & &\textrm{strongly in }
L^2(\Omega).\label{eq:53} 
\end{align}
With $\psi^f_h$ and $\phi^f_h$ defined as in \eqref{eq:52} and
\eqref{eq:53}, a discrete vector field can be specified, 
\begin{equation}
\label{eq:54}
   f_h = \tilde\nabla_h^\perp\psi^f_h + \nabla_h \phi^f_h.
\end{equation}
We take $f_h$ as the discretization of the continuous vector forcing
field $\fb$. We should point out that this is not the only way to
discretize the external forcing. There are ways that are probably more
convenient in real applications, e.g.~by taking $f_h$ to be the value
of the normal component of $\bs{f}$ at the intersection points of the
primary and dual cell edges. However, it will become clear later that the
expression \eqref{eq:54} is most convenient for convergence
analysis. But other discretizations consistent with this one are also
permissible. 

The discrete problem can now be stated as follows. 
\begin{quote}
% {\bfseries The discrete problem.}
 {\itshape For each $\fb\in L^2(\Omega)\times L^2(\Omega)$, let $f_h$ be defined as
   in \eqref{eq:54}. Find $u_h\in V_h$ and $p_h \in \Phi_h$ such that 
   \begin{equation}
\label{eq:55}
     -[\tilde\nabla_h^\perp\tilde\nabla_h\times u_h]_e + [\nabla_h
     p_h]_e = f_e,\qquad 1\leq e\leq N_e.
   \end{equation}
}
\end{quote}
The incompressibility condition and the homogeneous boundary
conditions on $u_h$ have been included in the specification of the
space $V_h$. It is important to note that equation \eqref{eq:55}
holds on interior edges only. On boundary edges, the computation of
$\tilde\nabla^\perp_h\tilde\nabla_h\times u_h$ will require boundary
conditions for $\tilde\nabla_h\times u_h$, which are not available a
priori.

As for the continuous problem, we multiply \eqref{eq:55} by
$v_h\in V_h$ and integrate by parts, and noticing that $v_h=0$ along
the boundary (see Lemma 2.5 of \cite{Chen:2016uw}), we obtain the
variational form of the numerical scheme,
\begin{equation}
\label{eq:56}
  (\tilde\nabla_h\times u_h,\, \tilde\nabla_h \times v_h) = 2(f_h,\,v_h).
\end{equation}
The term involving the pressure $p_h$ vanishes because of the
incompressibility condition on $v_h$. The factor 2 on the right-hand
side of \eqref{eq:56}  results from the integration-by-parts
process. It can also be directly explained by the fact that the 
inner product on the right-hand side only involves the normal
components of the vector fields. For $u_h,\,v_h\in V_h$, we define the
bilinear form
\begin{equation}
\label{eq:57}
  a_h(u_h,\,v_h) = (\tilde\nabla_h\times u_h,\, \tilde\nabla_h\times v_h).
\end{equation}
Then the variational form of the numerical scheme can be stated as
follows.
\begin{quote}
% {\bfseries The discrete problem.}
 {\itshape For each $\fb\in L^2(\Omega)\times L^2(\Omega)$, let $f_h$ be defined as
   in \eqref{eq:54}. Find $u_h\in V_h$ such that 
   \begin{equation}
\label{eq:58}
a_h(u_h,\,v_h) = 2(f_h,\,v_h),\qquad\forall v_h\in V_h.
   \end{equation}
}
\end{quote}
The existence and uniqueness of a solution $\ub$ to the problem
\eqref{eq:43} follow from the Lax-Milgram theorem; such result can be
found in \cite{Temam:2001wj}. The existence and uniqueness of a
solution $u_h$ to the discrete scheme \eqref{eq:55} or equivalently
\eqref{eq:58}, as well as its convergence to the true solution are
presented in \cite{Chen:2016uw}; see also 
\cite{Fuhrmann:2014bq}.

\subsection{Error estimates}\label{sec:error-estimates-2d}
We let $\ub$ be the true solution of \eqref{eq:38}--\eqref{eq:40}. For
the purpose of error analysis, we assume that $\ub$ is sufficiently
smooth. The exact regularity requirements will be made explicit
later. We let
\begin{equation*}
  \omega = \nabla\times\ub
\end{equation*}
be the vorticity, which is also assumed to be sufficiently
smooth. We then rewrite \eqref{eq:42} as
\begin{equation}
  \label{eq:59}
  -\nabla^\perp \omega + \nabla p = \fb.
\end{equation}
We consider an interior edge pair $\{l_e,\,d_e\}$, with $l_e$ bounded
by vertices $\nu_1$ and $\nu_2$, and $d_e$ bounded by cell centers
$i_1$ and $i_2$ (Figure \ref{fig:twocells}). The unit normal vector
$\nb_e$ on $l_e$ points from 
$i_1$ to $i_2$, and the unit tangential vector points from $\nu_1$ to
$\nu_2$. 
\begin{figure}[h]
  \centering
 \label{fig:twocells}
  \includegraphics[width=3in]{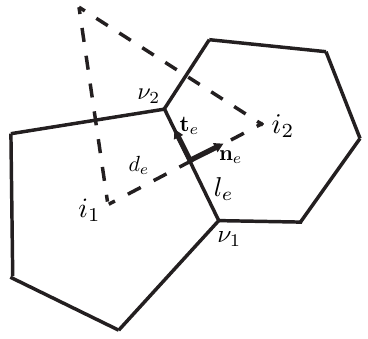}
  \caption{Edge pair}
\end{figure}
Taking the inner product of \eqref{eq:59} with $\nb_e$ and averaging
over $l_e$ yields
\begin{equation}
  \label{eq:60}
  \dfrac{\omega(x_{\nu_2}) - \omega(x_{\nu_1})}{|l_e|} +
  \dfrac{1}{|l_e|}\int_{l_e} \dfrac{\p p}{\p n_e} ds =
  \dfrac{1}{|l_e|}\int_{l_e} \fb\cdot\nb_e ds.
\end{equation}
We note that, thanks to the fact that $l_e$ and $d_e$ nearly bisect
each other, and provided that $p\in \mathcal{C}^3$, we have
\begin{equation}
  \label{eq:61}
  \dfrac{1}{|l_e|}\int_{l_e}\dfrac{\p p}{\p n_e} ds =
  \dfrac{p(x_{i_2}) - p(x_{i_1})}{|d_e|} - \tau^p_e,
\end{equation}
where 
\begin{equation*}
  \tau^p_e = O(h^2)
\end{equation*}
is the truncation error for the pressure. From \eqref{eq:49},
$\fb = \nabla^\perp\psi + \nabla \varphi$, 
\begin{equation*}
  \dfrac{1}{|l_e|}\int_{l_e}\fb\cdot\nb_e ds =
  \dfrac{1}{|l_e|}\int_{l_e} \left(-\dfrac{\p\psi}{\p t_e} + \dfrac{\p
      \varphi}{\p n_e}\right) ds.
\end{equation*}
Again, by the fact that the dual cell edges $l_e$ and $d_e$ nearly
bisect each other, and provided that $\psi^f$ and $\varphi^f$ are
sufficiently smooth,  the averaged quantities on the right-hand side can
be approximate by the corresponding difference formulae to the second
order, and therefore we have
\begin{equation}
  \dfrac{1}{|l_e|}\int_{l_e}\fb\cdot\nb_e ds =
  -\dfrac{\psi^f(x_{\nu_2}) - \psi^f(x_{\nu_1})}{|l_e|} +
  \dfrac{\varphi^f(x_{c_2}) - \varphi^f(x_{c_1})}{|d_e|} +
  \tau^f_e,\label{eq:62}
\end{equation}
where $\tau^f_e$ is the local truncation error the $\fb$, and
\begin{equation*}
  \tau^f_e = O(h^2).
\end{equation*}
We note that the external forcing $\fb$ is discretized as
\begin{equation*}
  f_h = \tilde\nabla^\perp_h \psi_h + \nabla_h \varphi_h.
\end{equation*}
Therefore, we have
\begin{equation}
  \dfrac{1}{|l_e|}\int_{l_e}\fb\cdot\nb_e ds = 
f_e + \tau^f_e.\label{eq:63}
\end{equation}
Combining \eqref{eq:60}, \eqref{eq:61}, and \eqref{eq:63}, we obtain
\begin{equation}
\label{eq:64}
  \dfrac{\omega(x_{\nu_2}) - \omega(x_{\nu_1})}{|l_e|} +
  \dfrac{p(x_{c_2}) - p(x_{c_1})}{|d_e|} =
  f_e + \tau^p_e + \tau^f_e.
\end{equation}
Using the gradient and the skewed gradient operators, the above
equation can be written as 
\begin{equation}
  \label{eq:65}
  -\left[ \tilde\nabla^\perp_h \R_h\omega\right]_e + \left[\nabla_h
    \R_h p\right]_e = \left[ f_h\right]_e + \tau^p_e + \tau^f_e,\qquad
  1\le e \le N_e.
\end{equation}
We let $v_h \in V_h$ be arbitrary. Multiplying \eqref{eq:65} by $v_h$
and integrating by parts yields the identity
\begin{equation}
  \label{eq:66}
  \left( \R_h \omega,\,\tilde\nabla_h\times v_h\right) = 2(f_h,\, v_h)
  + 2(\tau^p_h + \tau^f_h,\, v_h).
\end{equation}
It has been shown in \cite{Chen:2016uw} that the discrete curl of
$\R_h u$ approximate $\R_h\omega$ with a first-order error, provided
that $\psi^u\in \C^3(\Omega)$,
\begin{equation}
  \label{eq:67}
  \tilde\nabla_h \times \R_h u = \R_h \omega + \tau^\omega_h,
\end{equation}
where 
\begin{equation*}
  \tau^\omega_h = O(h).
\end{equation*}
This shows that the discrete curl operator $\tilde\nabla_h\times (
\cdot)$ is consistent with the curl operator
$\nabla\times(\cdot)$. But, since the truncation error $\tau^\omega_h$
is only of the first order, the approximation to the Laplace operator,
which is one order higher than the curl operator, will not be
consistent, in general, on non-uniform meshes. 

Substituting \eqref{eq:67} into \eqref{eq:66} yields
\begin{equation}
\label{eq:68}
  \left( \tilde\nabla_h\times\R_h u,\,\tilde\nabla_h\times v_h\right) = 2(f_h,\, v_h)
  + 2(\tau^p_h + \tau^f_h,\, v_h) +
  (\tau^\omega_h,\,\tilde\nabla\times v_h).
\end{equation}
Thus the truncation error consists of three parts: the pressure
gradient discretization error $\tau^p_h$, the external forcing
discretization error $\tau^f_h$, and the vorticity discretization
error $\tau^\omega_h$.

Using the relation \eqref{eq:58}, the error in the discrete solution
$u_h$ in the semi-$H^1$ norm can be calculated as
\begin{align*}
  |u_h - \R_h u|_1^2 =& a_h (u_h - \R_h u,\, u_h - \R_h u)\\
  =& a_h (u_h,\, u_h - \R_h u) - a_h (\R_h u,\, u_h - \R_h u)\\
  =& 2(f_h,\, u_h-\R_h u) - \left(\tilde\nabla_h \times \R_h u,\,
  \tilde\nabla_h\times(u_h - \R_h u)\right).
\end{align*}
Using the relation \eqref{eq:68}, and the discrete Poincar\'e
inequality, we obtain
\begin{align*}
  |u_h - \R_h u|_1^2 &=  2(\tau^p_h + \tau^f_h,\, v_h) +
  (\tau^\omega_h,\,\tilde\nabla\times v_h)\\
  &\leq 2|\tau^p_h|_0\cdot |u_h - \R_h u|_0 + 2|\tau^f_h|_0\cdot|u_h -
  \R_h u|_0 + |tau^\omega_h|_0\cdot |u_h - \R_h u|_1\\
   &\leq C\left(|\tau^p_h|_0 + |\tau^f_h|_0 + |\tau^\omega_h|_0\right)\cdot
     |u_h - \R_h u|_1.
\end{align*}
The estimate for the error in the $H^1$-norm thus results,
\begin{equation}
  \label{eq:69}
  |u_h - \R_h u|_1 \leq C\left(|\tau^p_h|_0 + |\tau^f_h|_0 +
    |\tau^\omega_h|_0\right) \leq C h.
\end{equation}
In the above, $C$ is a constant independent of the grid resolution $h$
or the solution $u$.
By another trivial application of the Poincar\'e inequality, the
estimate for the $L^2$-norm of the error is obtained, which is of the
same order. 

The above result is formally summarized in
\begin{theorem}
  If $\psi^u\in C^3(\Omega)$, $\psi^f\in C^3(\Omega)$, $\varphi^f\in
  C^3(\Omega)$, and $p \in 
  C^3(\Omega)$, then the discrete solution $u_h$ is first-order
  accurate in the discrete $H^1$-norm, i.e.
  \begin{equation*}
    |u_h - \R_h u|_1 = O(h).
  \end{equation*}
\end{theorem}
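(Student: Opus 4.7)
The plan is to use a standard energy estimate: we compute $|u_h - \R_h u|_1^2 = a_h(u_h - \R_h u, u_h - \R_h u)$, expand it into $a_h(u_h,\,u_h - \R_h u) - a_h(\R_h u,\,u_h - \R_h u)$, and replace each term by a quantity whose difference from the true forcing is a truncation error that we can control. The first term is handled by the discrete scheme \eqref{eq:58}, which immediately gives $a_h(u_h,\,u_h - \R_h u) = 2(f_h,\,u_h - \R_h u)$. For the second term we need a consistency identity for $\R_h u$.

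To derive this identity, I would start from the continuous momentum equation in vorticity form, $-\nabla^\perp \omega + \nabla p = \fb$, dot it with $\nb_e$, and average over the primary edge $l_e$. Under the near-bisection assumption on $\{l_e, d_e\}$, a Taylor expansion of $p$ and of the Helmholtz potentials $\psi^f, \varphi^f$ around the intersection point shows that the averaged normal derivatives equal the natural finite differences plus $O(h^2)$ remainders $\tau^p_e$ and $\tau^f_e$. Combined with the definition \eqref{eq:54} of $f_h$, this yields the pointwise identity \eqref{eq:65}. Multiplying by an arbitrary $v_h\in V_h$, integrating by parts (so the pressure drops out by incompressibility of $v_h$), produces the weak identity \eqref{eq:66}.

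Next I would invoke the curl consistency result from \cite{Chen:2016uw}, $\tilde\nabla_h\times \R_h u = \R_h\omega + \tau^\omega_h$ with $\tau^\omega_h = O(h)$, to rewrite \eqref{eq:66} as \eqref{eq:68}. This furnishes exactly the needed expression for $a_h(\R_h u,\, u_h - \R_h u)$. Subtracting the two identities and applying Cauchy--Schwarz together with the discrete Poincar\'e inequality for vector fields (which controls $|u_h - \R_h u|_0$ by $|u_h - \R_h u|_1$) gives
\begin{equation*}
|u_h - \R_h u|_1^2 \leq C\bigl(|\tau^p_h|_0 + |\tau^f_h|_0 + |\tau^\omega_h|_0\bigr)\,|u_h - \R_h u|_1,
\end{equation*}
from which the conclusion follows after dividing by $|u_h - \R_h u|_1$ and inserting the truncation bounds $O(h^2) + O(h^2) + O(h) = O(h)$.

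The main obstacle is the consistency step: verifying that the pressure and forcing truncation errors are $O(h^2)$ really does require the near-bisection hypothesis on each edge pair, since otherwise the midpoint-rule interpretation of the finite differences fails and one recovers only first-order averaged quantities (paralleling the one-dimensional phenomenon of Section~\ref{sec:error-estimates}). The curl-consistency bound $\tau^\omega_h = O(h)$ imported from \cite{Chen:2016uw} is the binding constraint that limits the overall rate to first order; the other two contributions are strictly better. No second-order cancellation is expected here because, unlike the one-dimensional case where placing $x_{i+\frac12}$ at the midpoint lifts the rate, the two-dimensional curl formula on unstructured staggered meshes does not enjoy an analogous symmetry in general.
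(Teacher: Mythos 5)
Your proposal is correct and follows essentially the same route as the paper: the energy identity $a_h(u_h-\R_h u,\,u_h-\R_h u)$, the edge-averaged consistency identity \eqref{eq:65}--\eqref{eq:66} derived from the vorticity form under the near-bisection assumption, the curl-consistency relation \eqref{eq:67} imported from \cite{Chen:2016uw}, and the final Cauchy--Schwarz/Poincar\'e step are all exactly the paper's argument. Your closing observation that the $O(h)$ bound on $\tau^\omega_h$ is the binding constraint also matches the paper's discussion following the theorem.
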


It is worth noting that the first-order error in the discrete solution
$u_h$ comes from the first-order error in the discretization of the
vorticity, which is the Laplace of the streamfunction. On certain
special meshes, such as the rectangular mesh, or the equilateral
triangular mesh, it is well-known that the discretization error for
the Laplace operator is second-order accurate. In these cases, the
$H^1$-norm of the error in $u_h$ will also be of second
order. However, at this point, we do not have a general
characterization of meshes on which such result holds. 

\section{Concluding remarks}
Analysis of finite difference finite volume methods is hard and often
{\itshape ad-hoc}, due to the lack of a variational framework;
\textcolor{rev}{progresses have been made in this area by various authors
(\cite{Faure:2006ky,Gie:2010vy,  Droniou2013-mq, Gie:2015tw}).} Error 
analysis of FDFV methods is further hindered by the fact that FDFV
schemes on non-uniform meshes are often inconsistent with the
underlying differential equations. The present work adopts a new
approach that combines the consistency in lower-order differential
operators with the external approximation framework for
function spaces (\cite{Cea:1964vy,{Temam:2001wj}}). The new combined
approach is first applied to the one-dimensional elliptic problem on a
non-uniform mesh. A first-order convergence rate, in both $L^2$ and
$H^1$ norms, is obtained, which agrees with the results obtained by a
different approach in \cite{Eymard:2000tt}. The new combined approach
is also applied to the MAC scheme for the incompressible Stokes
problem on an unstructured mesh. A first-order convergence
rate, in both $L^2$ and $H^1$ norms, is obtained. The results
presented here improve those of Nicolaides (\cite{Nicolaides:1992vs})
in that they hold on unstructured meshes. The current analysis also
shows that, on certain special meshes, including the rectangular mesh
considered by Nicolaides, the convergence rate is actually of the second
order. 

The new approach presented here has only a few general contingencies
such as a stable and convergent external approximation of the function
space associated with the continuous problem, and consistency in the
approximation of the lower-order differential operators that appear in
the equations. Thus, it is expected that the approach
can be applied to derive error estimates for a wide range of
problems, such as the steady-state Navier-Stokes problem, the
time-dependent Navier-Stokes problem, the quasi-geostrophic equation
for large-scale geophysical flows, and the shallow water equations. The
present paper demonstrates the
effectiveness of the new approach. Endeavors on the aforementioned
systems is currently on-going, and the results will be presented
elsewhere.  

%    Bibliographies can be prepared with BibTeX using amsplain,
%    amsalpha, or (for "historical" overviews) natbib style.
\bibliographystyle{amsplain}
%    Insert the bibliography data here.
\bibliography{references}

\end{document}